\documentclass[12pt,reqno]{article}
\usepackage{amscd,mathrsfs, amsfonts, graphicx, color}
\usepackage{amsmath}
\usepackage{amssymb}
\usepackage{amsopn}
\usepackage{amsthm}
\usepackage{amstext}


\RequirePackage{geometry}
\geometry{top=2.3cm,bottom=2.3cm,left=2.4cm,right=2.4cm}
\geometry{headheight=2.6cm,headsep=3mm,footskip=13mm}

\newtheorem{thm}{Theorem}[section]
\newtheorem{cor}[thm]{Corollary}
\newtheorem{lem}[thm]{Lemma}
\newtheorem{prop}[thm]{Proposition}
\theoremstyle{remark}
\newtheorem{defn}[thm]{\bf Definition}
\newtheorem{rem}[thm]{\bf Remark}

\numberwithin{equation}{section}

\def\Ran{\mathcal R}
\def\Ker{\mathcal N}
\def\R{\mathcal{R}}

\def\D{{\rm dist}}

\def\N{\mathcal{N}}

\begin{document}

\title{Perturbations and expressions of the Moore--Penrose metric generalized inverses and applications to the stability of some operator equations}
\author{Jianbing Cao\thanks{{\bf Email}: caocjb@163.com} \\
Department of mathematics, Henan Institute of Science and Technology\\
Xinxiang, Henan, 453003, P.R. China\\
Department of Mathematics, East China Normal University,\\
Shanghai 200241, P.R. China\\
\and
Yifeng Xue\thanks{{\bf Email}: yfxue@math.ecnu.edu.cn; Corresponding author}\\
Department of Mathematics, East China Normal University,\\
Shanghai 200241, P.R. China }
\date{}

\maketitle


\begin{abstract}
In this paper, the problems of perturbation and expression for the Moore--Penrose metric generalized inverses of bounded
linear operators on Banach spaces are further studied. By means of certain geometric assumptions of Banach spaces,
we first give some equivalent conditions for the Moore--Penrose metric generalized inverse of perturbed operator to
have the simplest expression $T^M(I+ \delta TT^M)^{-1}$. Then, as an application our results, we investigate the
stability of some operator equations in Banach spaces under different type perturbations.
\vspace{3mm}

 \noindent{2010 {\it Mathematics Subject Classification\/}: Primary 47A05; Secondary 46B20}

 \noindent{{\it Key words}: Metric generalized inverses, stable perturbation, operator equations}
\end{abstract}

\vskip 0.2 true cm


\pagestyle{myheadings}

\markboth{\rightline {\scriptsize  J. B. Cao \& Y. F. Xue}}
         {\leftline{Metric generalized inverses and operator equations}}

\bigskip

\section{Introduction}
Throughout the paper, we always let $X$ and $Y$ be Banach spaces, and $B(X, Y)$ be the Banach space consisting of all bounded linear operators from $X$ to $Y$. For $T\in B(X,Y)$, let $\Ker(T)$ (resp. $\Ran(T)$) denote the kernel (resp. range) of
$T$. It is well--known that for $T\in B(X,Y)$, if $\Ker(T)$ and $\Ran(T)$ are topologically complemented in the spaces $X$ and $Y$, respectively, then there exists a linear projector generalized inverse $T^+\in B(Y, X)$ defined by
$$
T^+Tx=x,\ x\in \Ker(T)^c\ \text{and}\ T^+y=0,\ y\in \Ran(T)^c,
$$
where $\mathcal{N}(T)^c$ and $\mathcal{R}(T)^c$ are topologically complemented subspaces of $\mathcal{N}(T)$ and $\mathcal{R}(T)$, respectively. We know that linear projector generalized inverses of bounded linear operators have many important
applications in numerical approximation, statistics and optimization et al. (see \cite{BG1,NV1,WYW1,XYF1}). But, generally speaking, the linear projector generalized inverse can not deal with the extremal solution, or the best approximation
solution of an ill--posed operator equation in Banach spaces. In order to solve the best approximation problems for an ill--posed linear operator equation in Banach spaces, Nashed and Votruba \cite{NMZF1} introduced the concept of the (set--valued) metric generalized inverse of a linear operator in Banach spaces. Later, in 2003, H. Wang and Y. Wang \cite{WWH11} defined the Moore--Penrose metric generalized inverse for a linear operator with closed range in Banach spaces, and gave some useful characterizations. Then in \cite{NRX1}, the author defined and characterized the Moore--Penrose metric generalized inverse for an arbitrary linear operator in a Banach space. From then on, many research papers about the Moore--Penrose metric generalized inverses have appeared in the literature, see \cite{BWLX1,HWZh1,Mah1q2,WLJ1,WLP1,ZhYWW1} for instance.

In his recent thesis\cite{Mah1}, H. Ma presented some perturbation
results of the Moore--Penrose metric generalized inverses under
certain additional assumptions, and also obtained some descriptions
of the Moore--Penrose metric generalized inverses in Banach spaces.
It is well--known that the perturbation analysis of generalized
inverses of linear operators has wide applications and plays an
important role in many fields such as computation, control theory,
frame theory and nonlinear analysis. While the metric projector on
closed subspace in Banach space are no longer linear, and then the
linear projector generalized inverse and the Moore--Penrose metric
projector generalized inverse of a bounded linear operator in Banach
space are quite different. Motivated by many perturbation results of
the linear operator generalized inverses in Hilbert spaces or Banach
spaces \cite{CX1,DJ1,DJ12,XYF1} and some recent results in
\cite{Mah1}, in this paper, we further study the following
perturbation and representation problems for the Moore--Penrose
metric generalized inverses: let $T \in B(X,Y)$ such that the
Moore--Penrose metric generalized inverse $T^M$ of $T$ exists, what
conditions on the small perturbation $\delta T$ can guarantee that
the Moore--Penrose metric generalized inverse $\bar{T}^M$ of the
perturbed operator $\bar{T} = T+ \delta T$ exists? Furthermore, if
it exists, when does $\bar{T}^M$ have the simplest expression $(I_X
+ T^M\delta T)^{-1}T^M$?  Under the geometric assumption that the
Banach spaces $X$ and $Y$ are smooth, and by using the generalized
orthogonal decomposition theorem \cite{WYW1}, we will give a
complete answer to these problems. Meanwhile, by using the reduced
minimum module and the gap function, we also characterize the
existence of the Moore--Penrose metric generalized inverse of the
perturbed operator in reflexive strictly convex Banach spaces.
The obtained results extend and improve many recent results in this field, for instance
\cite[Chapter 4]{Mah1}.

Perturbation analysis for the extremal solution of the linear
operator equation $Tx = y$ by using the linear generalized inverses (see \cite{CX1,DJ22}) have been made by many authors.
It is well known that the theory of the Moore--Penrose metric generalized inverses has its genetic in the context of the
so--called ``ill--posed'' linear problems. So, as applications of our results, in the last section of this paper, we will
investigate the stability of the solutions of the operator equation $Tx = y$ and the best approximate solutions of the
operator equation $\|Tx-b\|=\inf_{y\in X}\|Ty-b\|$ in Banach spaces under some different conditions.

\section{\bf Preliminaries}
In this section, we will recall some concepts and results frequently used in this paper. We first recall some related concepts about homogeneous operators and the geometry of Banach spaces. For more information about the geometric properties of Banach spaces, such as strict convexity, reflexivity and complemented subspaces, we refer to \cite{Be,BP1,Diestel1}.

Let $X, Y$ be Banach spaces, let $T: X \to Y$ be a mapping and $D\subset X$ be a subset of $X$. Recall from \cite{BWLX1,WP11} that a subset $D$ in $X$ is called to be homogeneous if $\lambda\,x\in D$ whenever
$x\in D$ and $\lambda\in\mathbb R$; a mapping $T\colon X \rightarrow Y$ is called to be a bounded homogeneous operator
if $T$ maps every bounded set in $X$ into a bounded set in $Y$ and $T(\lambda\, x)=\lambda\, T(x)$ for every $x\in X$ and every
$\lambda\in\mathbb R$. Let $H(X,Y)$ denote the set of all bounded homogeneous operators from $X$ to $Y$. Equipped with the usual linear operations on $H(X,Y)$ and norm on $ T\in H(X,Y)$ defined by $\|T\|=\sup\{\|Tx\|\,|\, \|x\|=1, x\in X\}$, we can easily prove that $(H(X,Y), \|\cdot\|)$ is a Banach space (cf. \cite{WYW1,WP11}). For a bounded
homogeneous operator $ T\in H(X,Y)$, we always denote by $\mathcal{D}(T)$, $\mathcal{N}(T)$ and $\mathcal{R}(T)$  the domain, the null space and respectively, the range of an operator $T$. Obviously, we have $B(X, Y) \subset H(X,Y)$.

\begin{defn}
Let $X, \,Y$ be Banach spaces and $M \subset X$ be a subset. Let $T:X \to Y$ be a mapping. Then we called $T$ is
quasi--additive on $M$ if $T$ satisfies
\begin{eqnarray*}
T(x+z)=T(x) +T(z), \qquad \forall\; x\in X, \;\forall\; z\in M.
\end{eqnarray*}
For a homogeneous operator $T\in H(X, X)$, if $T$ is quasi--additive on $\R(T)$, then we will simply call $T$ is a quasi--linear operator.
\end{defn}

\begin{defn}[cf.\cite{WYW1,WLP1}]\label{mdef1.1}
Let $P\in H(X,X)$. If $P^2=P$, we call $P$ is a homogeneous projector. In addition, if $P$ is also quasi--additive on $\R(P)$,
i.e., for any $x \in X$ and any $z\in \R(P)$,
  $$
  P(x + z) = P(x) + P(z) = P(x) + z,
  $$
then we call $P$ is a quasi--linear projector.
\end{defn}

Now we recall the definition of dual mapping for Banach spaces.
\begin{defn}[cf.\cite{BP1}]\label{dualfn1}
Let X be a Banach space. Then the set--valued mapping $F_X : X \to X^*$ defined by $$F_X(x) = \{f \in X^* \mid f(x)=\|x\|^2=\|f\|^2\},\qquad \forall x\in X$$
is called the dual mapping of $X$, where $X^*$ is the dual space of $X$.
\end{defn}

It is well known that the dual mapping $F_X$ is a homogeneous set--valued mapping; $F_X$ is surjective if and only if $X$ is reflexive; $F_X$ is injective or strictly monotone
if and only if $X$ is strictly convex; $F_X$ is single--valued if and only if $X$ is smooth. We will need these properties of $F_X$ to prove our main results for the Moore--Penrose metric generalized inverse. Please see \cite{BP1} for more information about the mapping $F_X$.

\begin{defn}[cf.\cite{IS1}]\label{metricpjdf1}
Let $X$ be a Banach space and $G \subset X$ be a subset of $X$. The set--valued mapping $P_G : X \to G$ defined by
\begin{eqnarray*}
P_G(x) = \{s \in G \mid \|x-s\| = \D(x, G)\},\qquad   \forall x\in X
\end{eqnarray*}
is called the set--valued metric projection, where $\D(x, G) = \inf_{z \in X} \|x-z\|$.
\end{defn}

For a subset $G \subset X$, if $P_G(x) \neq  \emptyset$ for each $x \in X$, then $G$ is said to be approximal; if $P_G(x)$ is at most a singleton
for each $x \in X$, then $G$ is said to be semi--Chebyshev; if $G$ is simultaneously approximal and semi--Chebyshev set, then $G$ is called a Chebyshev set. We denote by $\pi_G$ any selection for the set-valued mapping $P_G$, i.e., any single--valued mapping $\pi_G : \mathcal{D}(\pi_G) \to G$ with the property that $\pi_G(x) \in P_G(x)$ for any $x \in \mathcal{D}(\pi_G)$, where $\mathcal{D}(\pi_G) = \{x \in X : P_G(x) \neq  \emptyset\}$. For the particular case, when $G$ is a Chebyshev set, then $\mathcal{D}(\pi_G) = X$ and $P_G(x) = \{\pi_G(x)\}$. In this case, the mapping $\pi_G$ is called the metric projector from $X$ onto $G$.

\begin{rem}[cf.\cite{IS1}]\label{Chebyshevprp1}
Let $X$ be a Banach space and $G \subset X$ be a closed convex subset. It is well--known that if $X$ is reflexive, then $G$ is a proximal set; if $X$ is a strictly convex, then $G$ is a semi--Chebyshev set. Thus, every closed convex subset in a reflexive strictly convex Banach space is a Chebyshev set.
\end{rem}

The following lemma gives some important properties of the metric projectors.

\begin{lem}[cf.\cite{IS1}]\label{metricpjprp1}
Let $X$ be a Banach space and $L$ be a subspace of $X$. Then
\begin{enumerate}
  \item[\rm(1)]  $\pi_L^2(x) = \pi_L(x)$ for any $x \in \mathcal{D}(\pi_L)$, i.e., $\pi_L$ is idempotent;
  \item[\rm(2)] $\|x-\pi_L(x)\| \leq \|x\|$ for any $x \in \mathcal{D}(\pi_L)$, i.e., $\|\pi_L\|\leq 2$.
\end{enumerate}
In addition, If $L$ is a semi--Chebyshev subspace, then
\begin{enumerate}
  \item[\rm(3)]  $\pi_L(\lambda x) = \lambda \pi_L(x)$ for any $x \in X$ and $\lambda \in \mathbb{R}$, i.e., $\pi_L$ is homogeneous;
  \item[\rm(4)] $\pi_L(x + z) = \pi_L(x) + \pi_L(z) = \pi_L(x) + z$ for any $x \in \mathcal{D}(\pi_L)$ and $z \in L$, i.e., $\pi_L$ is quasi--additive on $L$.
\end{enumerate}
\end{lem}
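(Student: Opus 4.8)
The plan is to exploit two structural features of a linear subspace $L$: that the distance function $\D(\cdot, L)$ is invariant under scaling and under translation by elements of $L$, and that every point of $L$ is its own nearest point. Parts (1) and (2) require no convexity hypothesis, while (3) and (4) hinge on the semi--Chebyshev property to force the set--valued projection to collapse to a singleton, so that the chosen selection is pinned down uniquely.

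For (1), I would first record that for any $s \in L$ one has $\D(s, L) = 0$ with the infimum attained only at $s$ itself, so that $P_L(s) = \{s\}$ and hence every selection satisfies $\pi_L(s) = s$ for $s \in L$. Since $\pi_L(x) \in L$ by definition of the selection, applying this to $s = \pi_L(x)$ gives $\pi_L(\pi_L(x)) = \pi_L(x)$, which is idempotence. For (2), I would use that $0 \in L$, so $\D(x, L) \leq \|x - 0\| = \|x\|$; combined with $\|x - \pi_L(x)\| = \D(x, L)$ this yields the stated inequality, and then the triangle inequality $\|\pi_L(x)\| \leq \|\pi_L(x) - x\| + \|x\| \leq 2\|x\|$ gives $\|\pi_L\| \leq 2$.

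For (3), the key computation is the scaling invariance $\D(\lambda x, L) = |\lambda|\,\D(x, L)$, obtained by the change of variable $z = \lambda w$ in the defining infimum (valid because $w \mapsto \lambda w$ maps $L$ onto $L$ for $\lambda \neq 0$); the case $\lambda = 0$ is covered by $\pi_L(0) = 0$. Then $\|\lambda x - \lambda \pi_L(x)\| = |\lambda|\,\|x - \pi_L(x)\| = |\lambda|\,\D(x, L) = \D(\lambda x, L)$, together with $\lambda \pi_L(x) \in L$, shows $\lambda \pi_L(x) \in P_L(\lambda x)$; since $L$ is semi--Chebyshev this set is a singleton, forcing $\pi_L(\lambda x) = \lambda \pi_L(x)$. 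Part (4) is entirely parallel: translation invariance $\D(x + z, L) = \D(x, L)$ for $z \in L$ (change of variable $w \mapsto w - z$) gives $\|(x + z) - (\pi_L(x) + z)\| = \D(x + z, L)$, so $\pi_L(x) + z$ is a closest point in $L$ to $x + z$; the semi--Chebyshev hypothesis then identifies it with $\pi_L(x + z)$, and the middle equality follows from $\pi_L(z) = z$ as established in (1).

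The steps are individually short; the main point requiring care is the interplay between the set--valued projection and the chosen selection. In (3) and (4) one must first verify that the candidate element genuinely lies in $P_L(\cdot)$ and only then invoke semi--Chebyshev uniqueness to conclude that the selection must equal that element, rather than assuming the selection is homogeneous or additive a priori. A minor bookkeeping issue is the domain $\mathcal{D}(\pi_L)$: the distance invariances show that $\lambda x$ and $x + z$ lie in $\mathcal{D}(\pi_L)$ exactly when $x \in \mathcal{D}(\pi_L)$, so the asserted identities hold on the correct sets.
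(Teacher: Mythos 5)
Your proof is correct. The paper gives no proof of this lemma, simply citing Singer's book, and your argument --- deriving (1) and (2) from $\pi_L(s)=s$ for $s\in L$ and $0\in L$, and (3), (4) by exhibiting $\lambda\pi_L(x)$ resp.\ $\pi_L(x)+z$ as elements of $P_L(\lambda x)$ resp.\ $P_L(x+z)$ via the scaling and translation invariance of $\D(\cdot,L)$ and then invoking semi--Chebyshev uniqueness --- is exactly the standard argument found there.
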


The following so called generalized orthogonal decomposition theorem in Banach space is a main tool in this paper.

\begin{lem}[Generalized Orthogonal Decomposition Theorem\cite{HWZh1,WYW1}]\label{godtlem1}
Let $X$ be a Banach space and $G\subset X$ be a proximinal subspace. Then for any $x \in X$, we have
\begin{enumerate}
  \item[$(1)$] $x=x_1+x_2$ with $x_1 \in G$ and $x_2 \in F_X^{-1}(G^{\perp})$;
  \item[$(2)$] Furthermore, if $G\subset X$ is a Chebyshev subspace, then the decomposition in (1) is unique such that $x=\pi_G(x)+x_2$. In this case, we can write $X= G\dotplus F_X^{-1}(G^{\perp})$.
\end{enumerate}
Where, $G^{\perp}=\{f\in X^\ast | f(x)=0, \forall x\in G\}$ and  $F_X^{-1}(G^{\perp})=\{x\in X | F_X(x) \cap G^{\perp} \neq \emptyset\}$.
\end{lem}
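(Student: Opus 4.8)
The plan is to build the decomposition directly from a best approximation and then manufacture the required dual functional by a Hahn--Banach argument, reducing membership in $F_X^{-1}(G^\perp)$ to the defining equalities of the dual mapping.

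For part $(1)$, given $x \in X$, since $G$ is proximinal I would choose $x_1 \in P_G(x)$, so that $\|x - x_1\| = \D(x, G)$, and set $x_2 = x - x_1 \in X$. Because $G$ is a subspace and $x_1 \in G$, translating by $x_1$ gives $\D(x_2, G) = \D(x, G) = \|x_2\|$; that is, $0$ is a best approximation of $x_2$ in $G$. The task then reduces to showing $x_2 \in F_X^{-1}(G^\perp)$, i.e. producing $f \in X^*$ with $f \in G^\perp$ and $f(x_2) = \|x_2\|^2 = \|f\|^2$. If $x_2 = 0$ this is immediate with $f = 0$, since $F_X(0) = \{0\}$ and $0 \in G^\perp$. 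Otherwise I would invoke the standard Hahn--Banach corollary: as $\D(x_2, G) = \|x_2\| > 0$, there is $g_0 \in X^*$ with $\|g_0\| = 1$, $g_0|_G = 0$ and $g_0(x_2) = \D(x_2, G) = \|x_2\|$. Rescaling to $f := \|x_2\|\, g_0$ yields $f|_G = 0$, $\|f\| = \|x_2\|$ and $f(x_2) = \|x_2\|^2$, so $f \in F_X(x_2) \cap G^\perp$, which is exactly the assertion $x_2 \in F_X^{-1}(G^\perp)$.

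For part $(2)$, I first record the converse direction implicit above: if $x_2 \in F_X^{-1}(G^\perp)$ with witnessing functional $f \in F_X(x_2) \cap G^\perp$, then for every $g \in G$,
$$\|x_2\|\,\|x_2 - g\| = \|f\|\,\|x_2 - g\| \ge |f(x_2 - g)| = |f(x_2)| = \|x_2\|^2,$$
so $\|x_2 - g\| \ge \|x_2\|$, i.e. $0 \in P_G(x_2)$. Now suppose $x = x_1 + x_2 = x_1' + x_2'$ are two decompositions of the required type. From the displayed inequality and $x_1, x_1' \in G$, one gets $\|x - x_1\| = \|x_2\| = \D(x_2, G) = \D(x, G)$ and likewise $\|x - x_1'\| = \D(x, G)$, so both $x_1, x_1' \in P_G(x)$. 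Since $G$ is Chebyshev, $P_G(x) = \{\pi_G(x)\}$ is a singleton (Remark \ref{Chebyshevprp1}), forcing $x_1 = x_1' = \pi_G(x)$ and hence $x_2 = x_2'$; this gives the uniqueness and the formula $x = \pi_G(x) + x_2$. To justify the notation $X = G \dotplus F_X^{-1}(G^\perp)$ I would then check $G \cap F_X^{-1}(G^\perp) = \{0\}$: if $z$ lies in this intersection with witness $f \in F_X(z) \cap G^\perp$, then $z \in G$ forces $f(z) = 0$, whence $\|z\|^2 = f(z) = 0$ and $z = 0$.

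The existence and single-valuedness facts needed here are already supplied by Remark \ref{Chebyshevprp1} and the properties of $F_X$ recorded after Definition \ref{dualfn1}, so the crux of the argument is the Hahn--Banach step in part $(1)$: producing a norm-one functional that simultaneously annihilates $G$ and attains the distance $\D(x_2, G)$, together with the correct rescaling by $\|x_2\|$ so that it satisfies the defining equalities of $F_X$ rather than merely being a norming functional. The degenerate case $x_2 = 0$ (equivalently $x \in G$, using that a proximinal subspace is closed) must be flagged separately, since the norming functional is only available when $\D(x_2, G) > 0$.
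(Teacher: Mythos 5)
Your proof is correct. Note that the paper itself gives no proof of this lemma --- it is quoted verbatim from \cite{HWZh1,WYW1} --- so there is nothing internal to compare against; your argument (best approximation plus the Hahn--Banach corollary producing a norm-one annihilator of $G$ attaining $\D(x_2,G)$, rescaled by $\|x_2\|$ to land in $F_X(x_2)$) is exactly the standard proof from the cited sources, and you correctly handle the degenerate case $x_2=0$ and the uniqueness via the Chebyshev property.
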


Now we give the definition of the Moore--Penrose metric generalized for $T\in B(X, Y)$.

\begin{defn}[\cite{WYW1,WWH11}]\label{metricdef1.1}
Let $X$ and $Y$ be Banach spaces. Let $T \in B(X, Y)$. Suppose that $\mathcal{N}(T)$ and $\mathcal{R}(T)$ are Chebyshev subspaces of $X$ and $Y$, respectively. If there exists a bounded homogeneous operator $T^M: Y \to X$ such that:
\begin{eqnarray*}
(1)\, TT^MT = T; \quad (2)\, T^MTT^M= T^M; \quad  (3)\, T^MT = I_X-\pi_{\mathcal{N}(T)}; \quad  (4)\, TT^M= \pi_{\mathcal{R}(T)}.
\end{eqnarray*}
Then $T^M$ is called the Moore--Penrose metric generalized inverse of $T$, where $\pi_{\mathcal{N}(T)}$ and $\pi_{\mathcal{R}(T)}$ are the metric projectors onto $\mathcal{N}(T)$ and $\mathcal{R}(T)$, respectively.
\end{defn}

If $T^M$ exists, then it is also unique(cf. \cite{WYW1,WWH11}). Moreover, if $T^M$ exists, then by Lemma \ref{godtlem1}, the spaces $X$ and $Y$ have the following unique decompositions
\begin{eqnarray*}
X=\mathcal{N}(T) \dotplus F_X^{-1}(\mathcal{N}(T)^\perp),\qquad Y=\mathcal{R}(T) \dotplus F_Y^{-1}(\mathcal{R}(T)^\perp),
\end{eqnarray*}
respectively, where $F_X : X \to X^*$ (resp. $F_Y : Y \to Y^*$) is the set--valued dual mapping of $X$ (resp. $Y$). Please see \cite{WYW1} for more information about the Moore--Penrose metric generalized inverses and related knowledge. Here we only need the following result which characterizes the existence of the Moore--Penrose metric generalized inverse.

\begin{lem}[\cite{NRX1,WYW1}]\label{extlem1.1}
Let $X$ and $Y$ be Banach spaces. Let $T \in B(X, Y)$. If $\mathcal{N}(T)$ and $\mathcal{R}(T)$ are Chebyshev subspaces of $X$ and $Y$, respectively. Then there exists a unique Moore--Penrose metric generalized inverse $T^M$of $T$.
\end{lem}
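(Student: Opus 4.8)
The plan is to build $T^M$ explicitly from the two metric projectors furnished by the hypotheses, verify the four identities of Definition \ref{metricdef1.1} in turn, and close with a short purely algebraic uniqueness argument. First I would record that a Chebyshev subspace is in particular proximinal, hence closed; thus $\mathcal{R}(T)$ is a closed subspace of $Y$, which will be needed for boundedness. Since $\mathcal{N}(T)$ and $\mathcal{R}(T)$ are Chebyshev, Lemma \ref{metricpjprp1} supplies single-valued, homogeneous, idempotent metric projectors $\pi_{\mathcal{N}(T)}$ and $\pi_{\mathcal{R}(T)}$ that are quasi-additive on their respective subspaces, and Lemma \ref{godtlem1}(2) gives the decompositions $X=\mathcal{N}(T)\dotplus F_X^{-1}(\mathcal{N}(T)^\perp)$ and $Y=\mathcal{R}(T)\dotplus F_Y^{-1}(\mathcal{R}(T)^\perp)$. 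I would then define, for $y\in Y$,
$$
T^M(y)=(I_X-\pi_{\mathcal{N}(T)})(x_y),
$$
where $x_y\in X$ is any solution of $Tx=\pi_{\mathcal{R}(T)}(y)$; such an $x_y$ exists because $\pi_{\mathcal{R}(T)}(y)\in\mathcal{R}(T)$.

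The first thing to settle is that $T^M$ is well defined. Two solutions $x_y,x_y'$ differ by an element of $\mathcal{N}(T)$, and quasi-additivity of $\pi_{\mathcal{N}(T)}$ on $\mathcal{N}(T)$ (Lemma \ref{metricpjprp1}(4)) shows $I_X-\pi_{\mathcal{N}(T)}$ is insensitive to this difference. Homogeneity of $T^M$ follows from homogeneity of both projectors (Lemma \ref{metricpjprp1}(3)). For boundedness I would pass to the induced linear bijection $\widetilde T\colon X/\mathcal{N}(T)\to\mathcal{R}(T)$; as $\mathcal{R}(T)$ is closed, both sides are Banach spaces and the open mapping theorem gives $\widetilde T^{-1}$ bounded. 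Using the distance identity $\|(I_X-\pi_{\mathcal{N}(T)})(x_y)\|=\D(x_y,\mathcal{N}(T))=\|x_y+\mathcal{N}(T)\|_{X/\mathcal{N}(T)}$ together with $\|\pi_{\mathcal{R}(T)}(y)\|\le 2\|y\|$ (from Lemma \ref{metricpjprp1}(2)) then yields $\|T^M(y)\|\le 2\|\widetilde T^{-1}\|\,\|y\|$, so $T^M\in H(Y,X)$.

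Next I would verify the four conditions. For condition (3): for $x\in X$ one has $\pi_{\mathcal{R}(T)}(Tx)=Tx$, so one may take $x_{Tx}=x$, giving $T^MTx=(I_X-\pi_{\mathcal{N}(T)})x$. For condition (4): since $\pi_{\mathcal{N}(T)}(x_y)\in\mathcal{N}(T)$ and $T$ is linear, $TT^M(y)=T(x_y)=\pi_{\mathcal{R}(T)}(y)$. Condition (1) is then immediate, $TT^MT=\pi_{\mathcal{R}(T)}T=T$. For condition (2) I would first show $I_X-\pi_{\mathcal{N}(T)}$ is idempotent, which again comes from quasi-additivity via $\pi_{\mathcal{N}(T)}(x-\pi_{\mathcal{N}(T)}x)=0$; hence $T^MTT^M=(I_X-\pi_{\mathcal{N}(T)})T^M=T^M$.

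Finally, uniqueness is cheap once (3) and (4) are in hand: if $G$ and $H$ both satisfy (1)--(4), then $TG=TH=\pi_{\mathcal{R}(T)}$ and $GT=HT=I_X-\pi_{\mathcal{N}(T)}$, and associativity of composition gives $G=G(TG)=G(TH)=(GT)H=(HT)H=H(TH)=H$. The main obstacle I anticipate is the boundedness of $T^M$: all the projectors involved are genuinely nonlinear, so one cannot argue by operator-norm subadditivity directly. The resolution is to recognize that the right quantity to estimate is the quotient norm, which through the distance identity above transfers the nonlinear construction onto the bounded linear inverse $\widetilde T^{-1}$; the closedness of $\mathcal{R}(T)$ (guaranteed by the Chebyshev hypothesis) is exactly what makes this open-mapping argument available.
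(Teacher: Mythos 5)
The paper does not prove this lemma at all --- it is quoted from \cite{NRX1,WYW1} --- so there is no internal proof to compare against; your argument is the standard construction from those references and it is correct. Defining $T^M(y)=(I_X-\pi_{\mathcal{N}(T)})(x_y)$ with $Tx_y=\pi_{\mathcal{R}(T)}(y)$, checking well-definedness and idempotency of $I_X-\pi_{\mathcal{N}(T)}$ via quasi-additivity (Lemma \ref{metricpjprp1}(4)), getting boundedness from the identity $\|x_y-\pi_{\mathcal{N}(T)}(x_y)\|=\D(x_y,\mathcal{N}(T))$ together with the open mapping theorem on $\widetilde T\colon X/\mathcal{N}(T)\to\mathcal{R}(T)$ (closedness of $\mathcal{R}(T)$ indeed follows from proximinality), and closing with the associativity-of-composition uniqueness argument are all sound; note only that the uniqueness step needs nothing beyond conditions (2)--(4), which you use correctly.
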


Finally, in this section, we list some basic results about the reduced minimum module and the gap between two subspaces in a Banach space. For more information, please see \cite{TK,XYF1}.

Let $X$ and $Y$ be Banach spaces, let $T\in B(X, Y)$. The reduced minimum module $\gamma(T)$ of $T$ is defined by
\begin{align*}
\gamma(T)=\inf\{\|Tx\| \mid \D(x, \mathcal{N}(T))=1,\;\; \forall x\in X\}.
\end{align*}

\begin{rem}\label{reducedm1}
From the definition of $\gamma(T)$, it is easy to see that $\|Tx\|\geq \gamma(T)\D(x, \mathcal{N}(T))$ for any $x\in X$. Moreover, according to \cite[Theorem 5.2]{TK}, we know that $\mathcal{R}(T)$ is closed if and only if $\gamma(T)>0$.
\end{rem}

Let $X$ be a Banach space, let $M,\,N$ be two closed subspaces in $X$. We denote by $S_N$ the unit sphere of $N$ (i.e., the set of all $u \in N$ with $\|u\| = 1$). Set
$$
\delta(M,N)=\begin{cases}\sup\{\D(x,N)\,\vert\,x\in M,\,\|x\|=1\},\quad &M\not=\{0\}\\ 0 \quad& M=\{0\}\end{cases}.
$$

\begin{prop}[\cite{TK}]\label{2ssP1}
Let $M,\,N$ be closed subspaces in a Banach space $X$. Then
\begin{enumerate}
  \item[$(1)$] $\delta(M,N)=0$ if and only if $M\subset N$;
  \item[$(2)$] $\hat{\delta}(M,N)=0$ if and only if $M=N$;
  \item[$(3)$] $\hat{\delta}(M,N)=\hat{\delta}(N,M)$;
  \item[$(4)$] $0\leq \delta(M,N)\leq 1$, $0\leq \hat{\delta}(M,N)\leq 1$.
\end{enumerate}
\end{prop}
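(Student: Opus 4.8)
The plan is to treat the four assertions in the order (4), (1), (3), (2), since the later parts reduce to the earlier ones once the definition of the gap $\hat\delta(M,N)=\max\{\delta(M,N),\delta(N,M)\}$ is recalled. The single computational fact underlying everything is the homogeneity of the distance to a subspace: for any $x\in X$ and $\lambda\in\mathbb R$ one has $\D(\lambda x,N)=|\lambda|\,\D(x,N)$, because $N$ is a subspace and hence $\lambda z$ ranges over $N$ as $z$ does. I would record this identity first, as it is used repeatedly below.

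For (4), fix $x\in M$ with $\|x\|=1$. Since $0\in N$, we have $\D(x,N)=\inf_{z\in N}\|x-z\|\le\|x\|=1$, while $\D(x,N)\ge 0$ trivially; taking the supremum over the unit sphere of $M$ gives $0\le\delta(M,N)\le 1$, the case $M=\{0\}$ being $0$ by definition. The corresponding bounds for $\hat\delta$ then follow at once, since it is the maximum of two quantities already confined to $[0,1]$.

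For (1), the direction $M\subset N\Rightarrow\delta(M,N)=0$ is immediate: every unit $x\in M$ lies in $N$, so $\D(x,N)=0$ and the supremum vanishes (and if $M=\{0\}$ the value is $0$ by definition). Conversely, suppose $\delta(M,N)=0$ with $M\neq\{0\}$. Then $\D(x,N)=0$ for every unit vector $x\in M$, and by the homogeneity relation $\D(x,N)=0$ for every $x\in M$. Here the crucial hypothesis enters: because $N$ is \emph{closed}, $\D(x,N)=0$ forces $x\in N$, whence $M\subset N$. This is the only place where closedness is used, and it is the main (though mild) point of the argument — without it one would only conclude $M\subset\overline{N}=N$.

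Finally, (3) is immediate from the symmetric definition, since $\hat\delta(M,N)=\max\{\delta(M,N),\delta(N,M)\}=\hat\delta(N,M)$, and (2) follows by applying (1) to both ordered pairs: $\hat\delta(M,N)=0$ if and only if $\delta(M,N)=\delta(N,M)=0$, i.e. $M\subset N$ and $N\subset M$, which is exactly $M=N$. Thus the entire proposition rests on the elementary distance-scaling identity together with the closedness of the subspaces, and I expect no serious obstacle beyond careful bookkeeping of the definitions.
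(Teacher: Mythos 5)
Your proof is correct. The paper itself offers no proof of this proposition --- it is quoted from Kato's book \cite{TK} as a known fact --- so there is nothing to compare against; your argument (homogeneity of $x\mapsto\D(x,N)$, the bound $\D(x,N)\le\|x-0\|=1$, closedness of $N$ to pass from $\D(x,N)=0$ to $x\in N$, and the reduction of (2) and (3) to (1) via $\hat{\delta}(M,N)=\max\{\delta(M,N),\delta(N,M)\}$, a definition the paper omits but which you correctly supply) is the standard one and is complete.
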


\section{The simplest expression and existence of the Moore--Penrose metric generalized inverse of the perturbed operator}

In order to proof the main results in the paper, we need the following three lemmas.
\begin{lem}[{\cite[Lemma 3.1, Lemma 3.2, Theorem 3.4]{CX}}]\label{qlem2.8}
Let $T\in B(X,Y)$ such that $T^M$ exists and let $\delta T\in B(X,Y)$ such that $T^M$ is quasi--additive on
$\Ran(\delta T)$ and $\|T^M\delta T\|<1$. Put $\bar T=T+\delta T$. Then
\begin{enumerate}
\item[$(1)$] $I_X+T^M\delta T$ and $I_Y+\delta TT^M$ are invertible in $B(X,X)$ and $H(Y,Y)$, respectively;
\item[$(2)$] $\Phi=(I_X+T^M\delta T)^{-1}T^M=T^M(I_Y+\delta T T^M)^{-1}\in H(Y,X)$;
\item[$(3)$] $\bar T\Phi\bar T=\bar T$ and $\Phi\bar T\Phi=\Phi$ when $\Ran(\bar T)\cap\Ker(T^M)=\{0\}$.
\end{enumerate}
\end{lem}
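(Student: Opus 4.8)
The plan is to treat the three assertions in order, using the quasi--additivity of $T^M$ on $\mathcal{R}(\delta T)$ as the basic device that repeatedly lets me split $T^M$ across a sum whenever one summand lies in $\mathcal{R}(\delta T)$. Throughout I abbreviate $K:=T^M\delta T$ and $\Phi:=(I_X+K)^{-1}T^M$.

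For (1), I would first observe that $K$ is in fact \emph{linear}: it is homogeneous as a composite of homogeneous maps, and for $x_1,x_2\in X$ the quasi--additivity of $T^M$ on $\mathcal{R}(\delta T)$ applied to $\delta T x_2\in\mathcal{R}(\delta T)$ gives $T^M(\delta T x_1+\delta T x_2)=T^M\delta T x_1+T^M\delta T x_2$. Since $\|K\|<1$, the Neumann series yields $(I_X+K)^{-1}=\sum_{n\ge0}(-K)^n\in B(X,X)$. For $I_Y+\delta T T^M$ I would \emph{not} use a series (its norm is not controlled) but instead exhibit the explicit inverse $S:=I_Y-\delta T\,\Phi$. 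Verifying $S(I_Y+\delta T T^M)=I_Y$ and $(I_Y+\delta T T^M)S=I_Y$ is a direct computation in which every application of $T^M$ to a vector of the form $y+\delta T(\cdots)$ is collapsed by quasi--additivity; as $S$ is homogeneous this proves invertibility in $H(Y,Y)$.

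For (2), the key is the intertwining identity $(I_X+K)T^M=T^M(I_Y+\delta T T^M)$, which again follows from $T^M(y+\delta T T^M y)=T^M y+T^M\delta T T^M y$ by quasi--additivity. Multiplying on the left by $(I_X+K)^{-1}$ and on the right by $(I_Y+\delta T T^M)^{-1}$ immediately gives the two expressions for $\Phi$, which is homogeneous as a composite of homogeneous maps.

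For (3) I would first record the reductions $\Phi\bar T=I_X-(I_X+K)^{-1}\pi_{\mathcal{N}(T)}$ and $\bar T\Phi=I_Y-Q(I_Y+\delta T T^M)^{-1}$, where $Q:=I_Y-TT^M=I_Y-\pi_{\mathcal{R}(T)}$; these come from $T^M\bar T=T^MT+K=I_X-\pi_{\mathcal{N}(T)}+K$ and $\bar T T^M=(I_Y+\delta T T^M)-Q$. Writing $V:=\mathcal{N}(\pi_{\mathcal{N}(T)})=\mathcal{R}(T^M)$, which is a genuine closed subspace precisely because $I_X-\pi_{\mathcal{N}(T)}$ is quasi--additive on its range, and noting that $\mathcal{R}(K)\subseteq V$ makes $V$ invariant under $(I_X+K)^{-1}$, I obtain $\mathcal{R}(\Phi)\subseteq V$, hence $\pi_{\mathcal{N}(T)}\Phi=0$ and therefore $\Phi\bar T\Phi=\Phi$; this half needs no extra hypothesis. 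The identity $\bar T\Phi\bar T=\bar T$ is the real obstacle, and here the condition $\mathcal{R}(\bar T)\cap\mathcal{N}(T^M)=\{0\}$ must enter. Since $QT=0$ and $Q$ is quasi--additive on $\mathcal{R}(Q)=\mathcal{N}(T^M)$, one reduces $\bar T\Phi\bar T=\bar T$ to proving $c:=\delta T(I_X+K)^{-1}\pi_{\mathcal{N}(T)}x\in\mathcal{R}(T)$ for every $x$. The trick I would use is to set $z:=(I_X+K)^{-1}\pi_{\mathcal{N}(T)}x$, so that $(I_X+K)z=\pi_{\mathcal{N}(T)}x$ gives $z=\pi_{\mathcal{N}(T)}x-T^Mc$, whence $\bar T z=Tz+\delta T z=-\pi_{\mathcal{R}(T)}c+c=Qc$. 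Thus $Qc=\bar T z\in\mathcal{R}(\bar T)$ while also $Qc\in\mathcal{R}(Q)=\mathcal{N}(T^M)$, so the intersection hypothesis forces $Qc=0$, i.e.\ $c\in\mathcal{R}(T)$. I expect producing this single element $Qc=\bar T z$ that simultaneously lies in $\mathcal{R}(\bar T)$ and in $\mathcal{N}(T^M)$ to be the crux of the whole argument; everything else is bookkeeping with quasi--additivity and the projector identities of $T^M$.
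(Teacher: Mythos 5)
The paper itself offers no proof of this lemma: it is quoted verbatim from the reference \cite{CX} (``Perturbation analysis of bounded homogeneous generalized inverses on Banach spaces'', submitted), so there is no in--text argument to compare your route against, and your proposal has to be judged on its own. On that basis, parts (1) and (2) and the identity $\bar T\Phi\bar T=\bar T$ in (3) are sound: the observation that $K=T^M\delta T$ is genuinely \emph{linear} (so the Neumann series applies), the explicit inverse $I_Y-\delta T\Phi$ for $I_Y+\delta TT^M$, the intertwining identity $(I_X+K)T^M=T^M(I_Y+\delta TT^M)$, and above all the production of the element $Qc=\bar Tz\in\mathcal{R}(\bar T)\cap\mathcal{N}(T^M)$ are all correct, and each invocation of quasi--additivity of $T^M$ on $\mathcal{R}(\delta T)$ is legitimate (the relevant summands $\delta Tx$, $\delta TT^My$, $c=\delta Tz$ do lie in $\mathcal{R}(\delta T)$, which is a linear subspace).

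The one genuine flaw is in your proof of $\Phi\bar T\Phi=\Phi$. You assert that $V=\mathcal{N}(\pi_{\mathcal{N}(T)})=\mathcal{R}(T^M)=F_X^{-1}(\mathcal{N}(T)^\perp)$ is ``a genuine closed subspace because $I_X-\pi_{\mathcal{N}(T)}$ is quasi--additive on its range'', and your Neumann--series invariance argument for $(I_X+K)^{-1}$ needs $V$ to be additively and topologically closed. Neither claim is available: $T^M$ is only assumed quasi--additive on $\mathcal{R}(\delta T)$, not on $\mathcal{R}(T)$, so $T^MT=I_X-\pi_{\mathcal{N}(T)}$ carries no additivity beyond homogeneity; and $F_X^{-1}(\mathcal{N}(T)^\perp)$ is in general only a homogeneous closed set, not a linear subspace --- indeed the paper's remark in Section 3 on Ma's Theorem 4.3.3 records ``$F_X^{-1}(\mathcal{N}(T)^\perp)$ is a linear subspace of $X$'' as a \emph{separate hypothesis}, which would be vacuous if your claim were automatic. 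Fortunately the conclusion you extract, $\pi_{\mathcal{N}(T)}\Phi=0$, is true and has a one--line proof that bypasses $V$ entirely: use the second expression $\Phi=T^M(I_Y+\delta TT^M)^{-1}$ from your part (2), together with $T^M=T^MTT^M=(I_X-\pi_{\mathcal{N}(T)})T^M$ and $\pi_{\mathcal{N}(T)}(I_X-\pi_{\mathcal{N}(T)})=0$ (the quasi--additivity of $\pi_L$ on $L$ from the preliminaries), to get $\pi_{\mathcal{N}(T)}T^M=0$ and hence $\pi_{\mathcal{N}(T)}\Phi=0$. With that substitution the argument is complete.
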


\begin{lem}\label{qlem11.3}
Let $T\in B(X,Y)$ with $\Ran(T)$ closed and $\mathcal{N}(T)$ (resp. $\mathcal{R}(T)$) Chebyshev in $X$ (resp. $Y$).
Let $\delta T\in B(X,Y)$ with $T^M$ quasi--additive on $\Ran(\delta T)$ and $\|T^M\delta T\|<1$. Put $\bar T=T+\delta T$ and
$\Phi=T^M(I_Y+\delta TT^M)^{-1}=(I_X+T^M\delta T)^{-1}T^M$.
Then $\mathcal{R}(\Phi)=F_X^{-1}(\mathcal{N}(T)^\perp)$ and $\mathcal{N}(\Phi)=F_Y^{-1}(\mathcal{R}(T)^\perp)$.
\end{lem}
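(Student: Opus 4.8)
The plan is to pin down the range and null space of $T^M$ itself first, and then transport these to $\Phi$ by exploiting the two factorizations $\Phi = (I_X + T^M\delta T)^{-1}T^M = T^M(I_Y + \delta T T^M)^{-1}$ together with the invertibility of $I_X + T^M\delta T$ and $I_Y+\delta TT^M$ supplied by Lemma \ref{qlem2.8}(1). For the preliminary identities I would read off from the defining relations in Definition \ref{metricdef1.1} that $\mathcal{R}(T^M) = F_X^{-1}(\mathcal{N}(T)^\perp)$ and $\mathcal{N}(T^M) = F_Y^{-1}(\mathcal{R}(T)^\perp)$. Indeed, relation (2) gives $\mathcal{R}(T^M) = \mathcal{R}(T^MTT^M) = \mathcal{R}(T^MT) = \mathcal{R}(I_X - \pi_{\mathcal{N}(T)})$, and the unique generalized orthogonal decomposition $X = \mathcal{N}(T)\dotplus F_X^{-1}(\mathcal{N}(T)^\perp)$ from Lemma \ref{godtlem1} identifies this range with $F_X^{-1}(\mathcal{N}(T)^\perp)$. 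Dually, relation (4) gives $T^My = 0 \Rightarrow TT^My = \pi_{\mathcal{R}(T)}y = 0 \Rightarrow y \in F_Y^{-1}(\mathcal{R}(T)^\perp)$, while the converse uses relation (2) in the form $T^My = T^M(TT^My)$ together with the uniqueness of the decomposition of $Y$.

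For the null space of $\Phi$ I would use the first factorization. Since $(I_X + T^M\delta T)^{-1}$ is a linear bijection of $X$, the identity $\Phi y = (I_X + T^M\delta T)^{-1}T^My$ shows $\Phi y = 0$ if and only if $T^My = 0$; hence $\mathcal{N}(\Phi) = \mathcal{N}(T^M) = F_Y^{-1}(\mathcal{R}(T)^\perp)$ immediately, one inclusion coming from homogeneity ($T^My=0 \Rightarrow \Phi y = 0$) and the other from injectivity of the inverse.

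The range statement requires both expressions. The inclusion $\mathcal{R}(\Phi) \subseteq \mathcal{R}(T^M)$ is immediate from $\Phi = T^M(I_Y + \delta T T^M)^{-1}$. For the reverse inclusion — the real content, and the place where the hypothesis that $T^M$ is quasi-additive on $\Ran(\delta T)$ enters — I would argue constructively: given $x = T^Mw \in \mathcal{R}(T^M)$, set $y = w + \delta T x$. Since $\delta T x \in \Ran(\delta T)$, quasi-additivity yields $T^My = T^M(w+\delta Tx) = T^Mw + T^M\delta T x = (I_X + T^M\delta T)x$, whence $\Phi y = (I_X + T^M\delta T)^{-1}T^My = x$, so $x \in \mathcal{R}(\Phi)$. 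Combining the two inclusions gives $\mathcal{R}(\Phi) = \mathcal{R}(T^M) = F_X^{-1}(\mathcal{N}(T)^\perp)$.

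The one genuinely delicate point I anticipate is precisely this reverse range inclusion. Because $T^M$ and the metric projectors are only homogeneous and not additive, one cannot treat $\mathcal{R}(T^M)$ as an ordinary linear subspace, and the splitting $T^M(w + \delta T x) = T^Mw + T^M\delta T x$ is legitimate only because $\delta T x$ lands in $\Ran(\delta T)$, on which $T^M$ is assumed quasi-additive. Recognizing the correct preimage $y = w + \delta T x$ is the key trick; the remainder is bookkeeping with the two factorizations of $\Phi$ and the invertibility from Lemma \ref{qlem2.8}.
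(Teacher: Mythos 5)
Your proposal is correct and follows the same route as the paper's own (much terser) proof: reduce to $\mathcal{R}(\Phi)=\mathcal{R}(T^M)$ and $\mathcal{N}(\Phi)=\mathcal{N}(T^M)$ via the two factorizations of $\Phi$, then identify $\mathcal{R}(T^M)$ and $\mathcal{N}(T^M)$ from Definition \ref{metricdef1.1} and the generalized orthogonal decomposition of Lemma \ref{godtlem1}. You simply make explicit the details the paper leaves to the reader, in particular the quasi-additivity step needed for the reverse range inclusion.
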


\begin{proof}
From Lemma \ref{qlem2.8}, we see $\Phi$ is well defined. Then, according to the expressions of $\Phi$, we have $\Ran(\Phi)=\Ran(T^M)$ and $\Ker(\Phi)=\Ker(T^M)$.
From Lemma \ref{godtlem1} and Definition \ref{metricdef1.1}, we can see that
$\mathcal{R}(T^M)=F_X^{-1}(\mathcal{N}(T)^\perp)$ and $\mathcal{N}(T^M)=F_Y^{-1}(\mathcal{R}(T)^\perp)$.
\end{proof}

For convenience, we recall the concept of smoothness of Banach space. Let $X^*$ be the dual space of $X$. Let $S(X)$ and
$S(X^*)$ be the unit spheres of $X$ and $X^*$, respectively. We say $X$ is smooth if for each point $x\in S(X)$ there
exists a unique $f \in S(X^*)$ such that $f(x)=1$. Please see \cite{Diestel1} for more information about this important
concept and related topics. We have indicated that if $X$ is smooth, then the dual mapping $F_X$
(see Definition \ref{dualfn1}) is single-valued.

\begin{lem}\label{smoothlem1}
Let $M, N \subset X$ be Chebyshev subspaces of $X$. If $X$ is smooth, then $F_X^{-1}(M^{\perp})=F_X^{-1}(N^{\perp})$
if and only if $M=N$.
\end{lem}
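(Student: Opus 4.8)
The plan is to prove the two implications separately, with the forward direction being immediate and the converse carrying all the content. For the ``if'' direction I would simply observe that $M=N$ forces $M^{\perp}=N^{\perp}$ as subsets of $X^*$, and applying the preimage $F_X^{-1}$ to two equal subsets of $X^*$ yields $F_X^{-1}(M^{\perp})=F_X^{-1}(N^{\perp})$; no geometric hypothesis is needed here.

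For the converse I would set $L:=F_X^{-1}(M^{\perp})=F_X^{-1}(N^{\perp})$. Since $M$ and $N$ are Chebyshev subspaces, Lemma \ref{godtlem1} supplies the decompositions $X=M\dotplus F_X^{-1}(M^{\perp})=M\dotplus L$ and $X=N\dotplus L$, so $M$ and $N$ are both complements of the \emph{same} subspace $L$. I would resist concluding $M=N$ from this alone, since algebraic complements of a fixed subspace are far from unique; instead the argument must exploit the dual/metric information encoded in $L$. The core step is then to fix $m\in M$ and decompose it against $N$ by Lemma \ref{godtlem1}(1), writing $m=n+l$ with $n=\pi_N(m)\in N$ and $l\in F_X^{-1}(N^{\perp})=L$. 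Because $X$ is smooth, $F_X$ is single--valued (see Definition \ref{dualfn1} and the remark preceding Lemma \ref{smoothlem1}), so from $l\in L=F_X^{-1}(M^{\perp})$ I read off that the unique functional $F_X(l)$ lies in $M^{\perp}$, whence $F_X(l)(m)=0$, and from $l\in L=F_X^{-1}(N^{\perp})$ that the \emph{same} functional lies in $N^{\perp}$, whence $F_X(l)(n)=0$. Using the defining identity $F_X(l)(l)=\|l\|^2$ together with $l=m-n$ gives
\[
\|l\|^2=F_X(l)(l)=F_X(l)(m)-F_X(l)(n)=0,
\]
so $l=0$ and $m=n\in N$. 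This proves $M\subseteq N$, and exchanging the roles of $M$ and $N$ yields the reverse inclusion, hence $M=N$.

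The hard part, and the only place where smoothness is genuinely used, is the insistence in the core step that one and the same functional $F_X(l)$ annihilates both $M$ and $N$. If $F_X$ were set--valued one could only guarantee some $f_1\in F_X(l)\cap M^{\perp}$ and some possibly different $f_2\in F_X(l)\cap N^{\perp}$, and then the cancellation $F_X(l)(m)-F_X(l)(n)=0$ would collapse, since $f_1$ need not kill $n$ nor $f_2$ kill $m$. Thus single--valuedness of $F_X$ (equivalently, smoothness of $X$) is precisely what forces both annihilation conditions onto a single functional and drives $\|l\|^2$ to $0$; this is the step I would check most carefully.
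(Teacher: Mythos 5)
Your proof is correct and follows essentially the same route as the paper: decompose an element of $M$ against $N$ and $L=F_X^{-1}(N^{\perp})$, use single--valuedness of $F_X$ (smoothness) to place the one functional $F_X(l)$ in both $M^{\perp}$ and $N^{\perp}$, and conclude $\|l\|^2=F_X(l)(m-n)=0$. The only difference is presentational — you argue the inclusion $M\subseteq N$ directly and invoke symmetry, while the paper runs the same computation as a proof by contradiction starting from $m\in M\setminus N$ — and your closing remark correctly isolates where smoothness is indispensable.
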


\begin{proof}
If $M=N$, obviously, we have $F_X^{-1}(M^{\perp})=F_X^{-1}(N^{\perp})$.

Suppose that $F_X^{-1}(M^{\perp})=F_X^{-1}(N^{\perp})\triangleq G$. We prove that $M =N$ if $X$ is smooth. In fact, since $M, N$ are Chebyshev subspace of $X$, by the Generalized Orthogonal Decomposition Theorem (cf. Lemma \ref{godtlem1}) in Banach space, we have $$X=M \dotplus F_X^{-1}(M^{\perp})=N\dotplus F_X^{-1}(N^{\perp}).$$
Then for any $m\in M \backslash N$, we have the unique decomposition $m=m+0$ with respect to $M$ and $G$. Noting that we
also have the unique decomposition $m=n_1+n_2$ with respect to $N$ and $G$, where $n_1\in N$ and $n_2 \in G$. If
$M \neq N$, by the uniqueness of the decomposition, we must have $n_2\neq 0$.

Since $X$ is smooth, then $F_X$ is single--valued. So from $F_X^{-1}(M^{\perp})=F_X^{-1}(N^{\perp})\triangleq G$,
we get that $f=F_X(n_2)\in N^\perp\cap M^\perp$, that is, $f(n_2)=\|f\|^2=\|n_2\|^2$ and $f(m)=f(n_1)=0$. Therefore,
$\|n_2\|^2=f(m-n_1)=0$ and $n_2=0$, which is a contradiction. Thus, we have $M=N$.
\end{proof}

Under the geometric assumptions that both $X$ and $Y$ are smooth Banach spaces, now we can prove the following useful result for the perturbation of the Moore--Penrose metric generalized inverse of the perturbed operator.

\begin{thm}\label{mainperpthm1}
Let $X, Y$ be smooth Banach spaces and let $T\in B(X,Y)$ with $\Ran(T)$ closed. Let $\delta T\in B(X,Y)$ and put
$\bar{T}=T+\delta T$. Assume that $\mathcal{N}(T)$ and $\mathcal{N}(\bar{T})$ are Chebyshev subspaces of $X$,
$\mathcal{R}(T)$ and $\mathcal{R}(\bar{T})$ are Chebyshev subspaces of $Y$. Then the Moore--Penrose metric generalized
inverse $T^M$ of $T$ exists. In addition, if $T^M$ is quasi--additive on $\mathcal{R}(\delta T)$ and $I_X+T^M\delta T$
is invertible in $B(X,X)$, then $\Phi=T^M(I_Y + \delta T T^M)^{-1}=(I_X + T^M\delta T)^{-1}T^M$ is well defined and
the following statements are equivalent:
\begin{enumerate}
  \item[$(1)$] $\Phi$ is the Moore--Penrose metric generalized inverse of $\bar{T}$, i.e., $\Phi=\bar{T}^M;$
  \item[$(2)$] $\mathcal{R}(\bar{T})=\mathcal{R}(T)$ and $\mathcal{N}(\bar{T})=\mathcal{N}(T);$
  \item[$(3)$] $\mathcal{R}(\delta T) \subset \mathcal{R}(T)$ and $\mathcal{N}(T) \subset \mathcal{N}(\delta T)$.
\end{enumerate}
\end{thm}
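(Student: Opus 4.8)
The plan is to prove the three-way equivalence by closing the loop $(1)\Rightarrow(2)\Leftrightarrow(3)\Rightarrow(1)$: I would establish $(2)\Leftrightarrow(3)$ by elementary algebra, derive $(1)\Rightarrow(2)$ from the range and kernel of $\Phi$ together with smoothness, and finally obtain $(3)\Rightarrow(1)$ by checking the four defining identities of Definition \ref{metricdef1.1} for $\Phi$ relative to $\bar T$.

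First I would dispatch $(2)\Leftrightarrow(3)$ using $\delta T=\bar T-T$. For $(3)\Rightarrow(2)$, since $TT^M=\pi_{\mathcal{R}(T)}$ fixes $\mathcal{R}(T)$ and $\mathcal{R}(\delta T)\subset\mathcal{R}(T)$, the identity $\bar T=T(I_X+T^M\delta T)+(I_Y-\pi_{\mathcal{R}(T)})\delta T$ collapses to $\bar T=T(I_X+T^M\delta T)$; invertibility of $I_X+T^M\delta T$ then gives $\mathcal{R}(\bar T)=\mathcal{R}(T)$, while $\mathcal{N}(T)\subset\mathcal{N}(\delta T)$ forces $(I_X+T^M\delta T)$ to fix $\mathcal{N}(T)$ pointwise, so $\bar Tx=0\Leftrightarrow(I_X+T^M\delta T)x\in\mathcal{N}(T)\Leftrightarrow x\in\mathcal{N}(T)$, yielding $\mathcal{N}(\bar T)=\mathcal{N}(T)$. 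The converse $(2)\Rightarrow(3)$ is immediate, since $\delta Tx=\bar Tx-Tx$ shows $\mathcal{R}(\delta T)\subset\mathcal{R}(T)$ and kills $\mathcal{N}(\bar T)=\mathcal{N}(T)$.

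For $(1)\Rightarrow(2)$, the key input is Lemma \ref{qlem11.3}, which identifies $\mathcal{R}(\Phi)=F_X^{-1}(\mathcal{N}(T)^\perp)$ and $\mathcal{N}(\Phi)=F_Y^{-1}(\mathcal{R}(T)^\perp)$ (this portion of the lemma uses only that $\Phi$ is well defined through both of its expressions, not the norm bound). On the other hand, if $\Phi=\bar T^M$, then Definition \ref{metricdef1.1} together with Lemma \ref{godtlem1} gives $\mathcal{R}(\Phi)=F_X^{-1}(\mathcal{N}(\bar T)^\perp)$ and $\mathcal{N}(\Phi)=F_Y^{-1}(\mathcal{R}(\bar T)^\perp)$. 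Comparing these, $F_X^{-1}(\mathcal{N}(T)^\perp)=F_X^{-1}(\mathcal{N}(\bar T)^\perp)$ and $F_Y^{-1}(\mathcal{R}(T)^\perp)=F_Y^{-1}(\mathcal{R}(\bar T)^\perp)$; since $X$ and $Y$ are smooth and all four subspaces are Chebyshev, Lemma \ref{smoothlem1} converts these into $\mathcal{N}(\bar T)=\mathcal{N}(T)$ and $\mathcal{R}(\bar T)=\mathcal{R}(T)$, which is $(2)$. This is the conceptual heart of the argument: smoothness, through Lemma \ref{smoothlem1}, is precisely what permits recovering subspace equality from the single operator identity $\Phi=\bar T^M$; without it the passage from $F_X^{-1}(\cdot^\perp)$ back to the subspaces fails.

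Finally, for $(3)\Rightarrow(1)$ I would verify the four axioms of Definition \ref{metricdef1.1} for $\Phi$ relative to $\bar T$, using $(2)$ (hence $\pi_{\mathcal{R}(\bar T)}=\pi_{\mathcal{R}(T)}$ and $\pi_{\mathcal{N}(\bar T)}=\pi_{\mathcal{N}(T)}$) freely. Writing $\Phi=T^M(I_Y+\delta TT^M)^{-1}$ and invoking quasi-additivity of $\pi_{\mathcal{R}(T)}$ on $\mathcal{R}(T)$ (Lemma \ref{metricpjprp1}(4)) together with $\mathcal{R}(\delta T)\subset\mathcal{R}(T)$ gives $\bar T\Phi=\pi_{\mathcal{R}(T)}$ (axiom 4); rewriting $T^M\bar T=(I_X+T^M\delta T)-\pi_{\mathcal{N}(T)}$ and using $\mathcal{N}(T)\subset\mathcal{N}(\delta T)$ (so $T^M\delta T\,\pi_{\mathcal{N}(T)}=0$) gives $\Phi\bar T=I_X-\pi_{\mathcal{N}(T)}$ (axiom 3). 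Axioms 1 and 2 then follow by composition: $\bar T\Phi\bar T=\pi_{\mathcal{R}(T)}\bar T=\bar T$ since $\pi_{\mathcal{R}(T)}$ fixes $\mathcal{R}(\bar T)=\mathcal{R}(T)$, and $\Phi\bar T\Phi=(I_X-\pi_{\mathcal{N}(T)})\Phi=\Phi$ since $(I_X-\pi_{\mathcal{N}(T)})$ fixes $\mathcal{R}(\Phi)=F_X^{-1}(\mathcal{N}(T)^\perp)$. As $\Phi\in H(Y,X)$ and $\bar T^M$ exists by Lemma \ref{extlem1.1}, uniqueness of the Moore--Penrose metric generalized inverse yields $\Phi=\bar T^M$. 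I expect the main obstacle here to be the bookkeeping forced by the non-linearity of the metric projectors and the mere homogeneity of $\Phi$: each cancellation must be justified through quasi-additivity on the correct subspace rather than ordinary linearity, so the order of verification (axiom 4, then 3, then 1 and 2) is not incidental but essential.
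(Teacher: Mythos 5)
Your proposal is correct and follows essentially the same route as the paper: Lemma \ref{qlem11.3} plus Lemma \ref{smoothlem1} for $(1)\Rightarrow(2)$, and the factorizations $\bar T=T(I_X+T^M\delta T)=(I_Y+\delta TT^M)T$ together with direct verification of axioms (3) and (4) of Definition \ref{metricdef1.1} for $(3)\Rightarrow(1)$. The only cosmetic differences are that you close the loop via $(2)\Leftrightarrow(3)$ rather than the paper's cycle $(1)\Rightarrow(2)\Rightarrow(3)\Rightarrow(1)$, and you obtain axioms (1) and (2) by composing with $\pi_{\mathcal{R}(T)}$ and $I_X-\pi_{\mathcal{N}(T)}$ where the paper cites Lemma \ref{qlem2.8}(3).
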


\begin{proof}
Since $\mathcal{N}(T)$ and  $\mathcal{R}(T)$ are Chebyshev subspaces of $X$ and $Y$, respectively, then from Lemma
\ref{extlem1.1}, we know $T^M$ exists and is unique. If $T^M$ is quasi--additive on $\mathcal{R}(\delta T)$ and
$I_X+T^M\delta T$ is invertible in $B(X,X)$, then from Lemma \ref{qlem2.8}, we see
$$
\Phi=T^M(I_Y + \delta T T^M)^{-1}=(I_X + T^M\delta T)^{-1}T^M
$$
is well defined. Now we show that the equivalences hold.

$(1)\Rightarrow (2)$ Since $\Phi= \bar{T}^M$, then from Lemma \ref{qlem11.3}, we get that
$$
\mathcal{N}(\bar{T}^M)=\mathcal{N}(\Phi)=F_X^{-1}(\mathcal{N}(T)^\perp),\quad \mathcal{R}(\bar{T}^M)=
\mathcal{R}(\Phi)=F_X^{-1}(\mathcal{R}(T)^\perp).
$$
Since $\mathcal{N}(\bar{T})$ and  $\mathcal{R}(\bar{T})$ are Chebyshev subspaces of $X$ and $Y$, respectively, it follows from Lemma \ref{godtlem1} and Definition \ref{metricdef1.1} that $\mathcal{R}(\bar{T}^M)=
F_X^{-1}(\mathcal{N}(\bar{T})^\perp)$ and $\mathcal{N}(\bar{T}^M)=F_Y^{-1}(\mathcal{R}(\bar{T})^\perp)$.
Consequently,
\begin{align}\label{eqfxfy1}\begin{split}
F_X^{-1}(\mathcal{N}(\bar{T})^\perp)&=\mathcal{N}(\bar{T}^M)=\mathcal{N}(T^M)=F_X^{-1}(\mathcal{N}(T)^\perp),\\
F_Y^{-1}(\mathcal{R}(\bar{T})^\perp)&=\mathcal{R}(\bar{T}^M)=\mathcal{R}(T^M)=F_X^{-1}(\mathcal{R}(T)^\perp).
\end{split}\end{align}
Noting that $X$ and $Y$ are smooth Banach spaces, so we have $\mathcal{R}(\bar{T})=\mathcal{R}(T)$ and
$\mathcal{N}(\bar{T})=\mathcal{N}(T)$ from Lemma \ref{smoothlem1} and \eqref{eqfxfy1}.

$(2)\Rightarrow (3)$  Let $x\in \Ker(T) = \mathcal{N}(\bar{T})$. Then $Tx = 0$ and $Tx + \delta Tx = 0$. So
$\delta Tx = 0$, that is, $\mathcal{N}(T) \subset \mathcal{N}(\delta T)$. Let $y \in \Ran(\delta T)$, then there exists
some $x \in X$ such that $y=\delta Tx=\bar{T}x-Tx$. Noting that $\mathcal{R}(\bar{T})=\mathcal{R}(T)$, we have
$y\in \mathcal{R}(T)$, that is, $\mathcal{R}(\delta T) \subset \mathcal{R}(T)$.

$(3)\Rightarrow (1)$ From $\Ran(\delta T)\subset\Ran(T)$ and $\Ker(T)\subset\Ker(\delta T)$, we get that
$\pi_{\Ran(T)}\delta T=\delta T$ and $\delta T\pi_{\Ker(T)}=0$, that is, $TT^M\delta T=\delta T=\delta TT^MT$. Consequently,
\begin{equation}\label{eqfjdelta1}
\bar T=T+\delta T=T(I_X+T^M\delta T)=(I_Y+\delta TT^M)T.
\end{equation}
Since $I_X+T^M\delta T$ is invertible in $B(X,X)$ and $I_Y+\delta TT^M$ is invertible in $H(Y,Y)$ by Lemma \ref{qlem2.8},
we have $\Ran(\bar T)=\Ran(T)$ and $\Ker(\bar T)=\Ker(T)$ by \eqref{eqfjdelta1}. Thus $\bar T\Phi\bar T=\bar T$ and
$\Phi\bar T\Phi=\Phi$ by Lemma \ref{qlem2.8} and moreover,
\begin{align*}
\bar{T}\Phi&=(T+\delta T)T^M(I_Y + \delta T T^M)^{-1}= TT^M(I_Y+\delta TT^M)(I_Y + \delta T T^M)^{-1}\\&=TT^M=\pi_{\mathcal{R}(T)}=\pi_{\mathcal{R}(\bar{T})};\\
\Phi\bar{T}&=(I_X + T^M\delta T)^{-1}T^M(T+\delta T)= (I_X + T^M\delta T)^{-1}(I_X+T^M\delta T)T^MT\\&=T^MT=I_X-\pi_{\mathcal{N}(T)}=I_X-\pi_{\mathcal{N}(\bar{T})}.
\end{align*}
Therefore, $\Phi$ is the Moore--Penrose metric generalized inverse of $\bar{T}$, i.e., $\Phi= \bar{T}^M$.
\end{proof}

\begin{rem}
We should remark that, some related results of Theorem \ref{mainperpthm1} have been proved in \cite{Mah1}.
In \cite[Theorem 4.3.3]{Mah1}, under the assumptions that
\begin{enumerate}
\item[(1)] $\mathcal{N}(T)$ and $\mathcal{N}(\bar{T})$ are Chebyshev subspaces of $X$;
\item[(2)] $\mathcal{R}(T)$ and $\mathcal{R}(\bar{T})$ are Chebyshev subspaces of $Y$;
\item[(3)] $\|T^M \delta T\|<1$, $\mathcal{N}(T) \subset \mathcal{N}(\delta T)$ and $\mathcal{R}(\delta T)
 \subset \mathcal{R}(T)$;
\item[(4)] $F_X^{-1}(\mathcal{N}(T)^\perp)$ is a linear subspace of $X$ and $\R(T)$ is approximatively compact,
\end{enumerate}
the author proved that $\bar{T}^M$ exists and has the representations
$$
\bar{T}^M=T^M(I_Y + \delta T T^M)^{-1}=(I_X + T^M\delta T)^{-1}T^M.
$$
Thus, Theorem \ref{mainperpthm1} gives some generalization of the above results. We also note that our proof is more
concise. Please see \cite{Mah1} for more related results.
\end{rem}

From Theorem \ref{mainperpthm1}, it is easy to get the following perturbation result for the Moore--Penrose orthogonal projection generalized inverses of bounded linear operators in Hilbert spaces.

\begin{cor}[{\cite[Theorem 3.1]{DJ12}}]\label{djthm1.1}
Let $H, K$ be Hilbert spaces. Let $T \in B(H, K)$ have the Moore--Penrose generalized inverse $T^\dagger\in B(K, H)$.
Let $\delta T \in  B(H, K)$ with $\|T^\dagger \delta T\|<1$. Then $G=(I_X+T^\dagger \delta T)^{-1}T^\dagger$ is the
Moore--Penrose generalized inverse of $\bar{T}= T+\delta T$ if and only if $\mathcal{R}(\bar{T})=\mathcal{R}(T)$ and
$\mathcal{N}(\bar{T})=\mathcal{N}(T).$
\end{cor}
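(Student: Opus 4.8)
The plan is to deduce this corollary from Theorem \ref{mainperpthm1} by specializing to the Hilbert space setting, where the Moore--Penrose metric generalized inverse reduces to the classical Moore--Penrose inverse. First I would record the geometric facts we need: every Hilbert space is smooth (and strictly convex), and a subspace of a Hilbert space is Chebyshev precisely when it is closed, with the metric projector onto it equal to the orthogonal projection. Since $T^\dagger$ exists, $\mathcal{R}(T)$ is closed; together with the always-closed $\mathcal{N}(T)$ this makes both $\mathcal{N}(T)$ and $\mathcal{R}(T)$ Chebyshev. Comparing the defining relations $(1)$--$(4)$ of Definition \ref{metricdef1.1} with the classical Moore--Penrose equations, and using that the metric projectors coincide with the orthogonal projections $\pi_{\mathcal{N}(T)}$ and $\pi_{\mathcal{R}(T)}$, I would conclude $T^M = T^\dagger$. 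In particular $G=(I_H+T^\dagger\delta T)^{-1}T^\dagger$ is exactly the operator $\Phi$ of the theorem.

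Next I would verify the two standing hypotheses of Theorem \ref{mainperpthm1} concerning $\delta T$. Because $T^\dagger$ is a bounded \emph{linear} operator, it is automatically quasi--additive on $\mathcal{R}(\delta T)$. The assumption $\|T^\dagger\delta T\|<1$ then lets me invoke Lemma \ref{qlem2.8}$(1)$ to see that $I_H+T^\dagger\delta T$ is invertible in $B(H,H)$ (and $I_K+\delta T T^\dagger$ invertible in $H(K,K)$), so $\Phi=G$ is well defined and carries both expressions $\Phi=T^\dagger(I_K+\delta T T^\dagger)^{-1}=(I_H+T^\dagger\delta T)^{-1}T^\dagger$.

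The one hypothesis of Theorem \ref{mainperpthm1} that is not granted outright is that $\mathcal{N}(\bar T)$ and $\mathcal{R}(\bar T)$ be Chebyshev, i.e. that $\mathcal{R}(\bar T)$ be closed, and this is the point requiring care, which I would dispatch separately in each direction. For the ``if'' part I would assume condition $(2)$ of the corollary, $\mathcal{R}(\bar T)=\mathcal{R}(T)$ and $\mathcal{N}(\bar T)=\mathcal{N}(T)$; then $\mathcal{R}(\bar T)$ and $\mathcal{N}(\bar T)$ coincide with closed subspaces and are therefore Chebyshev, so Theorem \ref{mainperpthm1} applies and its implication $(2)\Rightarrow(1)$ yields $\Phi=\bar T^M=\bar T^\dagger$, that is $G=\bar T^\dagger$. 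For the ``only if'' part I would instead assume $G=\bar T^\dagger$; the mere existence of the classical Moore--Penrose inverse $\bar T^\dagger$ forces $\mathcal{R}(\bar T)$ to be closed, so both $\mathcal{N}(\bar T)$ and $\mathcal{R}(\bar T)$ are Chebyshev and $\bar T^\dagger=\bar T^M$. With all hypotheses now in force and $\Phi=G=\bar T^M$, the implication $(1)\Rightarrow(2)$ of Theorem \ref{mainperpthm1} delivers $\mathcal{R}(\bar T)=\mathcal{R}(T)$ and $\mathcal{N}(\bar T)=\mathcal{N}(T)$.

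Thus I expect the main obstacle to be not any hard computation but the bookkeeping that supplies the Chebyshev/closed-range hypothesis for $\bar T$, which is absent from the statement of the corollary: it comes for free in each direction, arising from $\mathcal{R}(\bar T)=\mathcal{R}(T)$ in the ``if'' direction and from the existence of $\bar T^\dagger$ in the ``only if'' direction. Everything else is the observation that in Hilbert spaces the metric machinery collapses to the familiar orthogonal one, so that Theorem \ref{mainperpthm1} specializes essentially verbatim.
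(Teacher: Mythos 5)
Your proof is correct and follows essentially the same route as the paper: specialize Theorem \ref{mainperpthm1} to Hilbert spaces, where the metric projectors become orthogonal projections, $T^M=T^\dagger$, smoothness is automatic, and $\|T^\dagger\delta T\|<1$ gives the required invertibility. You are in fact more careful than the paper's own three-line proof, which never explains how the standing hypothesis of Theorem \ref{mainperpthm1} that $\mathcal{N}(\bar T)$ and $\mathcal{R}(\bar T)$ be Chebyshev is supplied; your observation that this comes from $\mathcal{R}(\bar T)=\mathcal{R}(T)$ in the ``if'' direction and from the existence of $\bar T^\dagger$ in the ``only if'' direction fills a detail the paper leaves implicit.
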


\begin{proof}
Since $H$ and $K$ are Hilbert spaces, then from Definition \ref{metricpjdf1}, we see that the metric projector is just
the linear orthogonal projector. Now from Definition \ref{metricdef1.1}, we see obviously that the Moore--Penrose
metric generalized inverse $T^M$ of $T$ is indeed the Moore--Penrose orthogonal projection generalized inverse
$T^\dagger$ of $T$ under usual sense. It is well--known that Hilbert spaces are smooth and the condition
$\|T^\dagger \delta T\|<1$ implies $I_X+T^\dagger \delta T$ invertible, hence we can get the assertion by using
Theorem \ref{mainperpthm1}.
\end{proof}

Finally, in this section, by using the reduced minimum module and the gap function, we will give two simple results related to the existence of the Moore--Penrose metric generalized inverse of the perturbed operator in reflexive strictly convex Banach spaces. The following lemma is taken from \cite{XYF1}, which is proved for densely defined closed linear operators in Banach spaces. For our purpose, here we present it for bounded linear operators.

\begin{lem}[cf.\cite{XYF1}]\label{lammu1}
Let $X, Y$ be Banach spaces and $T, S \in B(X, Y)$. Suppose that there exist two constants $\lambda>0$ and $\mu\in \mathbb{R}$ such that $\|S x\|\geq \lambda\|Tx\|+\mu\|x\|$ for any $x\in X$, then
\begin{eqnarray*}
\gamma(S)\geq \lambda \gamma(T)(1-2\delta(\mathcal{N}(T), \mathcal{N}(S)))+\mu.
\end{eqnarray*}
\end{lem}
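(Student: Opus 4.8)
The plan is to work straight from the definitions: recall that
$\gamma(S)=\inf\{\|Sx\|\mid \D(x,\mathcal{N}(S))=1\}$, so I would estimate $\|Sx\|$ from below for each admissible $x$ by feeding the hypothesis $\|Sx\|\geq\lambda\|Tx\|+\mu\|x\|$ into the inequality $\|Tx\|\geq\gamma(T)\,\D(x,\mathcal{N}(T))$ supplied by Remark \ref{reducedm1}. This reduces the whole problem to controlling $\D(x,\mathcal{N}(T))$ from below in terms of $\D(x,\mathcal{N}(S))$, and it is precisely here that the gap quantity $\delta:=\delta(\mathcal{N}(T),\mathcal{N}(S))$ must enter.

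The first and central step is a purely geometric estimate relating the two distance functions, namely
\[
\D(x,\mathcal{N}(S))\leq(1+\delta)\,\D(x,\mathcal{N}(T))+\delta\,\|x\|,\qquad x\in X.
\]
To prove it I would fix $z\in\mathcal{N}(T)$ almost realizing $\D(x,\mathcal{N}(T))$, use the subadditivity of $w\mapsto\D(w,\mathcal{N}(S))$ to write $\D(x,\mathcal{N}(S))\leq\|x-z\|+\D(z,\mathcal{N}(S))$, then bound $\D(z,\mathcal{N}(S))\leq\delta\|z\|$ by the definition of $\delta$ together with homogeneity, and finally bound $\|z\|\leq\|x\|+\|x-z\|$; letting the approximation parameter in the choice of $z$ tend to $0$ gives the displayed inequality.

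The second step is to exploit the freedom in the representative. Since $Sn=0$ for $n\in\mathcal{N}(S)$, replacing $x$ by $x-n$ changes neither $\|Sx\|$ nor $\D(x,\mathcal{N}(S))$; hence in computing $\gamma(S)$ I may assume $\D(x,\mathcal{N}(S))=1$ is attained by a representative with $\|x\|\leq 1+\varepsilon$ for any prescribed $\varepsilon>0$, while still $\|x\|\geq\D(x,\mathcal{N}(S))=1$. Substituting $\D(x,\mathcal{N}(S))=1$ and $\|x\|\leq 1+\varepsilon$ into the geometric estimate yields $\D(x,\mathcal{N}(T))\geq\frac{1-\delta(1+\varepsilon)}{1+\delta}$, and since $\frac{1-\delta}{1+\delta}\geq 1-2\delta$ (their difference is $\frac{2\delta^{2}}{1+\delta}\geq 0$), letting $\varepsilon\to 0$ gives $\D(x,\mathcal{N}(T))\geq 1-2\delta$. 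Then $\|Sx\|\geq\lambda\gamma(T)\,\D(x,\mathcal{N}(T))+\mu\|x\|\geq\lambda\gamma(T)(1-2\delta)+\mu\|x\|$, using $\lambda\gamma(T)\geq 0$, and I would dispose of $\mu\|x\|$ by cases: if $\mu\geq 0$ use $\|x\|\geq 1$, while if $\mu<0$ use $\|x\|\leq 1+\varepsilon$, so that $\mu\|x\|\geq\mu-|\mu|\varepsilon$ in both cases. Taking the infimum over admissible $x$ and then $\varepsilon\to 0$ delivers $\gamma(S)\geq\lambda\gamma(T)(1-2\delta)+\mu$.

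The hard part will be the geometric estimate together with the careful bookkeeping of the two approximation parameters — the one hidden in the near-optimal $z$ and the representative-choice $\varepsilon$. In particular, the need to select the representative with $\|x\|$ close to $1$ is exactly what forces the $\mu\|x\|$ term to produce the clean constant $\mu$ when $\mu<0$; this coupling between the normalization and the sign of $\mu$ is the subtlety most easily overlooked.
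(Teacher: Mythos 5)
The paper does not actually prove this lemma---it is quoted from \cite{XYF1}, where it is established for densely defined closed operators---so there is no in-paper argument to compare against. Your proof is correct and is essentially the standard one: the estimate $\D(x,\mathcal{N}(S))\le(1+\delta)\,\D(x,\mathcal{N}(T))+\delta\|x\|$ is exactly the classical gap inequality from \cite{TK,XYF1}, the bound $\tfrac{1-\delta}{1+\delta}\ge 1-2\delta$ accounts for the factor $2$ in the statement, and your choice of a representative with $\|x\|\le 1+\varepsilon$ together with the case split on the sign of $\mu$ closes the argument without gaps.
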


\begin{thm}\label{redugapthm1}
Let $X, Y$ be reflexive strictly convex Banach spaces. Let $T, \delta T \in B(X, Y)$ with $\mathcal{R}(T)$ closed. Put $\bar{T}=T+\delta T$. Suppose that
\begin{eqnarray}\label{eqlabmu1}
\gamma(T)^{-1}\|\delta T\|<1\;\; and \; \;\delta(\mathcal{N}(T), \mathcal{N}(\bar{T}))<\dfrac{1}{2}(1-\gamma(T)^{-1}\|\delta T\|).
\end{eqnarray}
Then the Moore--Penrose metric generalized inverse $\bar{T}^M$ of $\bar{T}$ exists.
\end{thm}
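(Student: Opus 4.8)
The plan is to reduce the existence of $\bar{T}^M$ to two facts about the subspaces $\mathcal{N}(\bar{T})$ and $\mathcal{R}(\bar{T})$, and then use the gap estimate of Lemma \ref{lammu1} to control $\gamma(\bar{T})$. First I would invoke Lemma \ref{extlem1.1}: the Moore--Penrose metric generalized inverse $\bar{T}^M$ exists as soon as $\mathcal{N}(\bar{T})$ and $\mathcal{R}(\bar{T})$ are Chebyshev subspaces. Since $X$ and $Y$ are reflexive and strictly convex, Remark \ref{Chebyshevprp1} tells us that every closed subspace is automatically a Chebyshev subspace. The kernel $\mathcal{N}(\bar{T})$ is closed because $\bar{T}$ is bounded, so it is Chebyshev with no further work. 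Thus the whole problem collapses to showing that $\mathcal{R}(\bar{T})$ is closed, and by Remark \ref{reducedm1} this is equivalent to proving $\gamma(\bar{T}) > 0$.

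Second, to estimate $\gamma(\bar{T})$ I would set up the hypotheses of Lemma \ref{lammu1} with $S = \bar{T}$. The triangle inequality gives
$$
\|\bar{T}x\| = \|Tx + \delta Tx\| \geq \|Tx\| - \|\delta T\|\,\|x\|
$$
for every $x \in X$, which is exactly the comparison $\|Sx\| \geq \lambda\|Tx\| + \mu\|x\|$ with $\lambda = 1$ and $\mu = -\|\delta T\|$. Applying Lemma \ref{lammu1} then yields
$$
\gamma(\bar{T}) \geq \gamma(T)\bigl(1 - 2\delta(\mathcal{N}(T), \mathcal{N}(\bar{T}))\bigr) - \|\delta T\|.
$$

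Finally, I would feed the two hypotheses into this inequality. Because $\mathcal{R}(T)$ is closed, Remark \ref{reducedm1} gives $\gamma(T) > 0$, so $\gamma(T)^{-1}$ is meaningful and the gap assumption $\delta(\mathcal{N}(T), \mathcal{N}(\bar{T})) < \tfrac{1}{2}\bigl(1 - \gamma(T)^{-1}\|\delta T\|\bigr)$ rearranges (after multiplying by the positive quantity $\gamma(T)$) to $\gamma(T)\bigl(1 - 2\delta(\mathcal{N}(T), \mathcal{N}(\bar{T}))\bigr) > \|\delta T\|$. Substituting this into the displayed lower bound forces $\gamma(\bar{T}) > 0$; hence $\mathcal{R}(\bar{T})$ is closed, therefore Chebyshev, and Lemma \ref{extlem1.1} produces $\bar{T}^M$. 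I do not expect a genuine obstacle here: the only point requiring care is recognizing that the two hypotheses have been engineered precisely so that the crude choice $\mu = -\|\delta T\|$ already suffices, and that reflexivity together with strict convexity renders the Chebyshev requirements on both subspaces automatic once closedness of the range is secured.
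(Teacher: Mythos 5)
Your proposal is correct and follows essentially the same route as the paper's own proof: apply Lemma \ref{lammu1} with $S=\bar{T}$, $\lambda=1$, $\mu=-\|\delta T\|$, use the two hypotheses to force $\gamma(\bar{T})>0$, conclude that $\mathcal{R}(\bar{T})$ is closed via Remark \ref{reducedm1}, and then invoke Remark \ref{Chebyshevprp1} and Lemma \ref{extlem1.1}. No substantive differences to report.
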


\begin{proof}
Since $\|\bar{T}x\|\geq \|Tx\|-\|\delta T\|\|x\|$ for any $x\in X$, thus we can choose $S=\bar{T}$, $\lambda=1$ and $\mu=-\|\delta T\|$ in Lemma \ref{lammu1}. Now by using \eqref{eqlabmu1}, we can compute
\begin{align*}
\gamma(\bar{T})&\geq \gamma(T)(1-2\delta(\mathcal{N}(T), \mathcal{N}(\bar{T})))-\|\delta T\|\\&= \gamma(T)-2\gamma(T)\delta(\mathcal{N}(T), \mathcal{N}(\bar{T}))-\|\delta T\|\\&> \gamma(T)-\gamma(T)(1-\gamma(T)^{-1}\|\delta T\|)-\|\delta T\|\\&=0.
\end{align*}
Thus from Remark \ref{reducedm1}, we get $\mathcal{R}(\bar{T})$ is closed. $\mathcal{N}(\bar{T})$ is closed since $\bar{T} \in B(X, Y)$. Noting that $X, Y$ are reflexive strictly convex Banach spaces, then by Remark \ref{Chebyshevprp1}, we get that $\mathcal{N}(\bar{T})$ and $\mathcal{R}(\bar{T})$ are Chebyshev subspaces of $X$ and $Y$, respectively. From Lemma \ref{extlem1.1}, we see $\bar{T}^M$ uniquely exists. This completes the proof.
\end{proof}

\begin{cor}
Let $X, Y$ be reflexive strictly convex Banach spaces. Let $T, \delta T \in B(X, Y)$ be such that the Moore--Penrose metric generalized inverse $T^M$ of $T$ exists. Put $\bar{T}=T+\delta T$. Suppose that
\begin{eqnarray*}
\|T^M\|\|\delta T\|<1\;\; and \; \;\delta(\mathcal{N}(T), \mathcal{N}(\bar{T}))<\dfrac{1}{2}(1-\|T^M\|\|\delta T\|).
\end{eqnarray*}
Then the Moore--Penrose metric generalized inverse $\bar{T}^M$ of $\bar{T}$ exists.
\end{cor}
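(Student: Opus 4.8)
The plan is to deduce this corollary directly from Theorem \ref{redugapthm1}. The only gap between the two statements is that Theorem \ref{redugapthm1} is phrased in terms of the reduced minimum modulus $\gamma(T)$, whereas here the hypotheses are expressed through $\|T^M\|$. Hence the whole proof reduces to establishing the comparison $\gamma(T)^{-1}\le\|T^M\|$ and then transporting the two conditions in \eqref{eqlabmu1} by a monotonicity argument.

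First I would record the structural consequences of the existence of $T^M$. By Definition \ref{metricdef1.1}, the existence of $T^M$ forces $\mathcal{N}(T)$ and $\mathcal{R}(T)$ to be Chebyshev subspaces of $X$ and $Y$; in particular $\mathcal{R}(T)$ is closed, so by Remark \ref{reducedm1} we have $\gamma(T)>0$ and $\gamma(T)^{-1}$ is meaningful. This also places us in the setting where Theorem \ref{redugapthm1} can be applied, once its two numerical hypotheses have been verified.

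The key step is the inequality $\gamma(T)\ge\|T^M\|^{-1}$. For this I would use relation $(3)$ of Definition \ref{metricdef1.1}, namely $T^MT=I_X-\pi_{\mathcal{N}(T)}$. Fix any $x\in X$ with $\D(x,\mathcal{N}(T))=1$. Since $\pi_{\mathcal{N}(T)}(x)$ is the metric projection of $x$ onto the Chebyshev subspace $\mathcal{N}(T)$, it realizes the distance, so
\[
\|T^MTx\|=\|x-\pi_{\mathcal{N}(T)}(x)\|=\D(x,\mathcal{N}(T))=1.
\]
On the other hand $\|T^MTx\|=\|T^M(Tx)\|\le\|T^M\|\,\|Tx\|$, whence $\|Tx\|\ge\|T^M\|^{-1}$. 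Taking the infimum over all such $x$ gives $\gamma(T)\ge\|T^M\|^{-1}$, that is, $\gamma(T)^{-1}\le\|T^M\|$.

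Finally, I would transport the hypotheses. From $\gamma(T)^{-1}\le\|T^M\|$ we get $\gamma(T)^{-1}\|\delta T\|\le\|T^M\|\|\delta T\|<1$, which is the first condition in \eqref{eqlabmu1}. Moreover $1-\gamma(T)^{-1}\|\delta T\|\ge 1-\|T^M\|\|\delta T\|$, so
\[
\delta(\mathcal{N}(T),\mathcal{N}(\bar{T}))<\frac{1}{2}\bigl(1-\|T^M\|\|\delta T\|\bigr)\le\frac{1}{2}\bigl(1-\gamma(T)^{-1}\|\delta T\|\bigr),
\]
which is the second condition in \eqref{eqlabmu1}. Thus both hypotheses of Theorem \ref{redugapthm1} hold, and that theorem yields the existence (and uniqueness) of $\bar{T}^M$. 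The only nontrivial point is the comparison $\gamma(T)^{-1}\le\|T^M\|$; everything else is the monotone bookkeeping just described, so I expect no real obstacle once that inequality is in hand.
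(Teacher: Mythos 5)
Your proof is correct and follows the same route as the paper: both reduce the corollary to Theorem \ref{redugapthm1} via the inequality $\gamma(T)^{-1}\le\|T^M\|$ and then transfer the two numerical hypotheses by monotonicity. The only difference is that the paper simply cites \cite[Lemma 4.1.1]{Mah1} for that inequality, whereas you derive it directly from $T^MT=I_X-\pi_{\mathcal{N}(T)}$ and the fact that the metric projection realizes the distance --- a clean, self-contained argument that is a welcome addition rather than a deviation.
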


\begin{proof}
Thanks to Theorem \ref{redugapthm1}, we can prove our result by showing that $\|T^M\|\geq\gamma(T)^{-1}$. But this inequality follows from \cite[Lemma 4.1.1]{Mah1}.
\end{proof}

\section{Stability of some operator equations in Banach spaces}
In this section, by using our main perturbation results of the Moore--Penrose metric generalized inverses, we will study the stability of the solutions of the operator equation $Tx = y$ and the best approximate solutions(BAS) of the operator equation $\|Tx-b\|=\inf_{y\in X}\|Ty-b\|$ under different conditions. Throughout this section, we always assume that $X$ and $Y$ are Banach spaces, we also assume that $T \in(X, Y)$ such that both $\mathcal{N}(T)$ and $\mathcal{R}(T)$ are Chebyshev subspaces of $X$ and $Y$, respectively, so that the corresponding Moore--Penrose metric generalized inverse $T^M$ of $T$ is well defined as a bounded homogeneous operator. We also let $\delta T\in B(X,Y)$ such that $T^M$ is quasi--additive on $\mathcal{R}(\delta T)$ satisfying $\|T^M\delta T\|<1$ in this section, so that $I_X+T^M\delta T$ and $I_Y+\delta TT^M$ are invertible in $B(X,X)$ and $H(Y,Y)$, respectively.

($\mathbf{i}$) We first consider the following operator equation:
\begin{eqnarray}\label{oqeqsp1}
Tx=b.
\end{eqnarray}
Suppose that the equation \eqref{oqeqsp1} is perturbed to the following consistent operator equation:
\begin{eqnarray}\label{oqeqsp2j1}
\bar{T}z=b.
\end{eqnarray}
Where $\bar{T}=T+\delta T$ and $b\in \mathcal{R}(T)\cap \mathcal{R}(\bar{T})$ with $b\neq 0$. Denoted by $S(T, b)$ and $S(\bar{T}, \bar{b})$ the solution sets of the equations \eqref{oqeqsp1} and \eqref{oqeqsp2j1}, respectively. It is well known that the solution set $S(T, b)$ of the equation \eqref{oqeqsp1} can be written as $T^Mb + \mathcal{N}(T)$. Let $\kappa=\|T^M\|\|T\|$ be the condition number of $T$. Put $\epsilon_b=\dfrac{\|\delta b\|}{\|b\|}$ and $\epsilon_T=\dfrac{\|\delta T\|}{\|T\|}$, we will keep these notations throughout this section. Now we can prove the following theorem on the perturbations of the consistent linear operator equations \eqref{oqeqsp1} and \eqref{oqeqsp2j1} by using the Moore-Penrose metric generalized inverse.

\begin{thm}\label{opeqthm1fj1}
Let $T,\, \delta T\in B(X,Y)$. If $T^M$ is quasi--additive on $\mathcal{R}(T)$, then
\begin{enumerate}
  \item[$(1)$] For any solution $z \in S(\bar{T}, b)$ of the equation \eqref{oqeqsp2j1} there exists a solution $x\in S(T, b)$ of the equation \eqref{oqeqsp1} such that $\dfrac{\|z-x\|}{\|z\|}\leq \kappa\epsilon_{T}$;
  \item[$(2)$] For any solution $z \in S(\bar{T}, b)$ of the equation \eqref{oqeqsp2j1} there exists a solution $x\in S(T, b)$ of the equation \eqref{oqeqsp1} such that
\begin{eqnarray*}
\dfrac{\|z-x\|}{\|x\|}\leq \dfrac{\kappa\epsilon_{T}}{1-\|T^M \delta T\|}.
\end{eqnarray*}
\end{enumerate}
\end{thm}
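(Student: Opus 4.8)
The plan is to produce, for each solution $z$ of the perturbed equation \eqref{oqeqsp2j1}, one explicit nearby solution of \eqref{oqeqsp1} and then estimate the distance directly from a closed-form identity for $z-x$. Since $b\in\mathcal{R}(T)$, property $(4)$ of Definition \ref{metricdef1.1} gives $T(T^Mb)=TT^Mb=\pi_{\mathcal{R}(T)}b=b$, so $T^Mb\in S(T,b)$; combined with the decomposition $S(T,b)=T^Mb+\mathcal{N}(T)$ this suggests the candidate
\[
x=T^Mb+\pi_{\mathcal{N}(T)}(z),
\]
which lies in $S(T,b)$ because $\pi_{\mathcal{N}(T)}(z)\in\mathcal{N}(T)$ (and $\pi_{\mathcal{N}(T)}$ is single--valued, since $\mathcal{N}(T)$ is a Chebyshev subspace).

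First I would compute $z-x$. Property $(3)$ of Definition \ref{metricdef1.1} gives $T^MTz=(I_X-\pi_{\mathcal{N}(T)})z=z-\pi_{\mathcal{N}(T)}(z)$, hence $z=T^MTz+\pi_{\mathcal{N}(T)}(z)$ and therefore $z-x=T^MTz-T^Mb$. The crucial algebraic step is to collapse this into a single application of $T^M$: since $Tz\in\mathcal{R}(T)$ and $-b\in\mathcal{R}(T)$, the quasi--additivity of $T^M$ on $\mathcal{R}(T)$ yields $T^MTz-T^Mb=T^M(Tz-b)$. Finally $\bar{T}z=b$ forces $Tz-b=-\delta Tz$, so by homogeneity I obtain the identity $z-x=-T^M\delta Tz$.

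From this identity both estimates fall out. For $(1)$, $\|z-x\|=\|T^M\delta Tz\|\leq\|T^M\delta T\|\,\|z\|\leq\|T^M\|\|\delta T\|\,\|z\|=\kappa\epsilon_T\|z\|$, which is precisely $\|z-x\|/\|z\|\leq\kappa\epsilon_T$. For $(2)$ I would feed the same identity back through the triangle inequality: $\|z\|\leq\|x\|+\|z-x\|\leq\|x\|+\|T^M\delta T\|\,\|z\|$ gives $\|z\|\leq\|x\|/(1-\|T^M\delta T\|)$, and substituting this into $\|z-x\|\leq\|T^M\delta T\|\,\|z\|\leq\kappa\epsilon_T\|z\|$ produces $\|z-x\|/\|x\|\leq\kappa\epsilon_T/(1-\|T^M\delta T\|)$.

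The only genuinely delicate point is the homogeneity and quasi--additivity bookkeeping: $T^M$ is not linear, so the reduction $T^MTz-T^Mb=T^M(Tz-b)$ cannot be read off from linearity but must be justified by the quasi--additivity hypothesis on $\mathcal{R}(T)$ (applied with $u=Tz\in Y$ and $v=-b\in\mathcal{R}(T)$), and each factor of $-1$ must be absorbed through homogeneity. Everything else is a routine norm estimate; in particular the equalities $\kappa\epsilon_T=\|T^M\|\|\delta T\|$ and $\|T^M\delta T\|\leq\|T^M\|\|\delta T\|$ make the condition--number form of the bounds appear immediately.
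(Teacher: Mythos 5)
Your proposal is correct and follows essentially the same route as the paper: your candidate $x=T^Mb+\pi_{\mathcal{N}(T)}(z)$ is exactly the paper's $x=z+T^M\delta Tz$ (both constructions reduce to the same identity $z-x=-T^M\delta Tz$, yours via $T^MT=I_X-\pi_{\mathcal{N}(T)}$ and quasi--additivity on $\mathcal{R}(T)$, the paper's by verifying $Tx=b$ directly), and the norm estimates are identical. The only cosmetic difference is in part $(2)$, where you bound $\|z\|\leq\|x\|/(1-\|T^M\delta T\|)$ by the triangle inequality while the paper invokes $\|(I_X+T^M\delta T)^{-1}\|\leq(1-\|T^M\delta T\|)^{-1}$ --- equivalent steps.
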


\begin{proof}
(1) Let $z \in S(\bar{T}, b)$ be a solution of the equation \eqref{oqeqsp2j1}, taking $x=z+T^M \delta Tz$.  From the equations \eqref{oqeqsp1} and \eqref{oqeqsp2j1}, we see $\delta Tz= Tx-Tz$. We first show that $x\in S(T, b)$. In fact, since $T^M$ is quasi-additive on $\mathcal{R}(T)$, then
\begin{align*}
Tx&=T(z+T^M \delta Tz)=Tz+TT^M\delta Tz\\&=Tz+TT^M(Tx-Tz)\\&=Tz+\delta Tz=b.
\end{align*}
So $x$ is a solution of the equation \eqref{oqeqsp1}. Now we can check the error estimate.
\begin{align*}
\dfrac{\|z-x\|}{\|z\|}=\dfrac{\|T^M \delta Tz\|}{\|z\|}\leq\dfrac{\|T\|\|T^M\| \|\delta T\|\|z\|}{\|T\|\|z\|}= \kappa\epsilon_{T}.
\end{align*}

(2) For any $z \in S(\bar{T}, b)$, we also take $x=z+T^M \delta Tz=(I_X+T^M \delta T)z$. It follows from Lemma \ref{qlem2.8} that $(I_X+T^M\delta T)$ is invertible. Thus, we get $z=(I_X+T^M \delta T)^{-1}x$. Now we can compute as follows
\begin{align*}
\dfrac{\|z-x\|}{\|x\|}&=\dfrac{\|(I_X+T^M \delta T)^{-1}x-x\|}{\|x\|}\\&=\dfrac{\|(I_X+T^M \delta T)^{-1}[x-(I_X+T^M \delta T)x]\|}{\|x\|}\\&\leq\dfrac{\|(I_X+T^M \delta T)^{-1}\|\|T\|\|T^M\|\|\delta T\|\|x\|}{\|T\|\|x\|}\\&\leq\dfrac{\kappa\epsilon_{T}}{1-\|T^M \delta T\|}.
\end{align*}
Thus, we get our results.
\end{proof}

($\mathbf{ii}$) Now suppose that the equation \eqref{oqeqsp1} is perturbed to the following consistent linear operator equation:
\begin{eqnarray}\label{oqeqsp2}
\bar{T}z=\bar{b}.
\end{eqnarray}
Where $\bar{T}=T+\delta T$ and $\bar{b}=b+\delta b\in \mathcal{R}(\bar{T})$. Denoted by $S(\bar{T}, \bar{b})$ the solution set of the equation \eqref{oqeqsp2}.

\begin{thm}\label{opeqthm1}
Let $T,\, \delta T\in B(X,Y)$. If $T^M$ is quasi--additive on $\mathcal{R}(T)$, then for any solution $z \in S(\bar{T}, \bar{b})$ of the equation \eqref{oqeqsp2} there exists a solution $x\in S(T, b)$ of the equation \eqref{oqeqsp1} such that
\begin{eqnarray*}
\dfrac{1}{1+\|T^M\delta T\|}\left(\dfrac{\|T^M\delta b\|}{\|T^Mb\|+2\|z\|}-\kappa\epsilon_T\right)\leq\dfrac{\|z-x\|}{\|x\|}\leq \dfrac{\kappa(\epsilon_b+\epsilon_T)}{1-\|T^M\delta T\|}.
\end{eqnarray*}
\end{thm}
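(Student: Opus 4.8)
The plan is to relate the solution $z$ of the perturbed equation $\bar{T}z=\bar{b}$ to a solution $x$ of the original equation $Tx=b$ by the same substitution used in Theorem \ref{opeqthm1fj1}, and then extract the two-sided estimate by bounding $\|z-x\|$ from below and from above against $\|x\|$. First I would set $x=z+T^M\delta Tz=(I_X+T^M\delta T)z$, so that by Lemma \ref{qlem2.8} the operator $I_X+T^M\delta T$ is invertible and $z=(I_X+T^M\delta T)^{-1}x$, giving $\|z-x\|=\|T^M\delta Tz\|$. I would verify that this $x$ indeed lies in $S(T,b)$: using $T^M$ quasi-additive on $\mathcal{R}(T)$, $\bar{T}z=\bar b$ and the identity $TT^M=\pi_{\mathcal{R}(T)}$, one computes $Tx=Tz+TT^M\delta Tz$; writing $\delta Tz=\bar{T}z-Tz=\bar b-Tz$ and using $TT^M\bar b=TT^M(b+\delta b)$, this should collapse to $b$ after accounting for the $\delta b$ term. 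This is the place where the $\|T^M\delta b\|$ contribution enters, and I expect the verification of membership to be the main obstacle, since unlike Theorem \ref{opeqthm1fj1} the right-hand side has been perturbed as well.

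For the upper bound, I would estimate
\begin{align*}
\frac{\|z-x\|}{\|x\|}&=\frac{\|T^M\delta Tz\|}{\|x\|}=\frac{\|(I_X+T^M\delta T)^{-1}[x-(I_X+T^M\delta T)x]\|}{\|x\|}
\end{align*}
exactly as in part (2) of Theorem \ref{opeqthm1fj1}; after inserting $\|\delta T\|=\epsilon_T\|T\|$, $\kappa=\|T^M\|\|T\|$ and the Neumann bound $\|(I_X+T^M\delta T)^{-1}\|\le (1-\|T^M\delta T\|)^{-1}$, the $\kappa\epsilon_T$ term appears. The extra $\kappa\epsilon_b$ in the numerator should come from carrying the $T^M\delta b$ contribution through the same inverse estimate, bounding $\|T^M\delta b\|\le\|T^M\|\|\delta b\|=\kappa\epsilon_b\|b\|/\|T\|$ and comparing $\|b\|$ with $\|x\|$; so the combined numerator is $\kappa(\epsilon_b+\epsilon_T)$ over $1-\|T^M\delta T\|$.

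For the lower bound I would work in the reverse direction. Starting from $\|z-x\|=\|T^M\delta Tz\|$ and the decomposition $z-x=-T^M\delta Tz$, I would use the triangle inequality on the genuine difference $x=T^Mb+\mathcal{N}(T)$ versus $z=T^M\bar b+\mathcal{N}(\bar T)$ to isolate $\|T^M\delta b\|$ as the leading term and subtract off the $\kappa\epsilon_T$ perturbation of the operator, producing the factor $\|T^M\delta b\|-\kappa\epsilon_T(\|T^Mb\|+2\|z\|)$ in the numerator; dividing by $\|x\|$ and using $\|x\|\le(1+\|T^M\delta T\|)\|z\|$ together with $\|z\|$ bounded in terms of $\|T^Mb\|+2\|z\|$ should produce the stated prefactor $\frac{1}{1+\|T^M\delta T\|}$ and denominator $\|T^Mb\|+2\|z\|$. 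The main technical care is in choosing the right intermediate quantities so that both the additive term $\delta b$ and the multiplicative term $\delta T$ separate cleanly; I expect the lower estimate to require the most bookkeeping, since it is the reverse triangle inequality that is doing the work and the normalizing quantity $\|T^Mb\|+2\|z\|$ must be justified via Lemma \ref{metricpjprp1}(2) applied to $\|\pi_{\mathcal{N}(T)}\|\le 2$.
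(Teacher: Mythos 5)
Your choice of correspondence $x=(I_X+T^M\delta T)z$ is the crux of the problem, and it does not work here. With that choice one computes, using quasi-additivity of $T^M$ on $\mathcal{R}(T)$ and $\delta Tz=\bar b-Tz$,
\begin{equation*}
Tx=Tz+TT^M(\bar b-Tz)=Tz+\pi_{\mathcal{R}(T)}\bar b-Tz=\pi_{\mathcal{R}(T)}(b+\delta b)=b+\pi_{\mathcal{R}(T)}(\delta b),
\end{equation*}
which equals $b$ only when $\pi_{\mathcal{R}(T)}(\delta b)=0$. So your $x$ is generally \emph{not} in $S(T,b)$, and you correctly flag this as "the main obstacle" but never resolve it. The failure propagates: your identity $z-x=-T^M\delta Tz$ contains no $\delta b$ term at all, so neither the $\kappa\epsilon_b$ in the upper bound nor the $\|T^M\delta b\|$ in the lower bound can emerge from it; the plan to "carry the $T^M\delta b$ contribution through" is inconsistent with your own formula for $z-x$. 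The lower-bound paragraph is likewise only a sketch of intentions — the specific denominator $\|T^Mb\|+2\|z\|$ cannot be recovered from your setup.

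The paper resolves exactly this difficulty by choosing the correspondence the other way around: $x=T^Mb+(I_X-T^MT)z=T^Mb+\pi_{\mathcal{N}(T)}z$, which lies in $S(T,b)$ trivially because $TT^Mb=b$ and $T\pi_{\mathcal{N}(T)}z=0$. Then $z-x=T^M(Tz-b)\in\mathcal{R}(T^M)$, hence $\pi_{\mathcal{N}(T)}(z-x)=0$ and $T^MT(z-x)=z-x$, which yields the key identity
\begin{equation*}
(I_X+T^M\delta T)(z-x)=T^M\bar T(z-x)=T^M(\delta b-\delta Tx),
\end{equation*}
so $z-x=(I_X+T^M\delta T)^{-1}T^M(\delta b-\delta Tx)$. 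Both bounds then fall out of this single identity: the upper bound by the Neumann estimate, and the lower bound by the reverse triangle inequality together with $\|x\|\leq\|T^Mb\|+\|\pi_{\mathcal{N}(T)}z\|\leq\|T^Mb\|+2\|z\|$ (Lemma \ref{metricpjprp1}(2)), which is where that denominator comes from. To repair your argument you would need to switch to this (or an equivalent) choice of $x$; as written, the proposal has a genuine gap at the very first step.
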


\begin{proof}
Let $z \in \hat{S}(\bar{T}, \bar{b})$ be a solution of the equation \eqref{oqeqsp2}, put $x=T^Mb+(I_X-T^MT)z$. Then, we can check that $x\in S(T, b)$. Noting that $T^M$ is quasi-additive on $\mathcal{R}(T)$, then
\begin{eqnarray}\label{eqna1}
z-x=T^MTz-T^Mb=T^M(Tz-b)\in R(T^M)= F_X^{-1}(\mathcal{N}(T)^\perp).
\end{eqnarray}
It follows that $\pi_{\mathcal{N}(T)}(z-x)=0$ and then $$T^MT(z-x)=(I_X-\pi_{\mathcal{N}(T)})(z-x)=z-x.$$ Now from $\bar{T}z=\bar{b}$ and $Tx=b$, we can check that
\begin{align}\label{eqna2}\begin{split}
(I_X+T^M\delta T)(z-x)&=T^M(T+\delta T)(z-x)\\&=T^M\bar{T}(z-x)=T^M(\bar{T}z-Tx-\delta Tx)\\&=T^M(\delta b-\delta Tx).
\end{split}\end{align}
From Lemma \ref{qlem2.8}, we know that $(I_X+T^M\delta T)$ is invertible. Thus, from \eqref{eqna2}, we get
\begin{align*}
z-x&=(I_X+T^M\delta T)^{-1}T^M(\delta b-\delta Tx).
\end{align*}
So by using above equation, we can obtain
\begin{align}\label{eqna3}
\dfrac{\|z-x\|}{\|x\|}&=\dfrac{\|(I_X+T^M\delta T)^{-1}T^M(\delta b-\delta Tx)\|}{\|x\|}\nonumber\\& \leq \|(I_X+T^M\delta T)^{-1}\|\dfrac{\|T^M(\delta b-\delta Tx)\|}{\|x\|}\nonumber\\&\leq \dfrac{\kappa(\epsilon_b+\epsilon_T)}{1-\|T^M\delta T\|}.
\end{align}
Noting that $\|x\|\leq \|T^Mb\|+\|\pi_{\N(T)}z\|\leq \|T^Mb\|+2\|z\|$, thus, we also have
\begin{align}\label{eqna4}
\dfrac{\|z-x\|}{\|x\|}&\geq \dfrac{\|T^M(\delta b-\delta Tx)\|}{\|I_X+T^M\delta T\|\|x\|}\geq\dfrac{1}{1+\|T^M\delta T\|}\left(\dfrac{\|T^M\delta b\|}{\|T^Mb\|+2\|z\|}-\kappa\epsilon_T\right).
\end{align}
Now, our result follows from \eqref{eqna3} and \eqref{eqna4}. This completes the proof.
\end{proof}

\begin{cor}
Let $T,\, \delta T\in B(X,Y)$. Assume that $\mathcal{R}(\bar{T})$ is a Chebychev subspace in $Y$ and $T^M$ is quasi--additive on $\mathcal{R}(T)$. If $\mathcal{N}(\bar{T})=\mathcal{N}(T)$ and $\R(\bar T)\cap\N(T^M)=\{0\}$, then $\bar{T}^M$ exists. Furthermore, for equations \eqref{oqeqsp1} and \eqref{oqeqsp2} we have
\begin{eqnarray*}
\dfrac{1}{1+\|T^M\delta T\|}\left(\dfrac{\|T^M\delta b\|}{\|T^Mb\|}-\kappa\epsilon_T\right)\leq\dfrac{\|\bar{T}^M\bar{b}-T^Mb\|}{\|T^Mb\|}\leq \dfrac{\kappa(\epsilon_b+\epsilon_T)}{1-\|T^M\delta T\|}.
\end{eqnarray*}
\end{cor}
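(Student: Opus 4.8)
The plan is to deduce the corollary from Theorem \ref{opeqthm1} by selecting, among the solutions that theorem compares, the sharpest matched pair $z=\bar{T}^M\bar{b}$ and $x=T^Mb$, and then to re-run only the lower estimate with the improved normalization $\|x\|=\|T^Mb\|$.

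First I would settle the existence of $\bar{T}^M$. By the standing hypothesis of this section $\mathcal{N}(T)$ is Chebyshev, and since $\mathcal{N}(\bar{T})=\mathcal{N}(T)$ the null space $\mathcal{N}(\bar{T})$ is Chebyshev as well; combined with the assumption that $\mathcal{R}(\bar{T})$ is Chebyshev, Lemma \ref{extlem1.1} gives that $\bar{T}^M$ exists and is unique. The role of $\mathcal{R}(\bar{T})\cap\mathcal{N}(T^M)=\{0\}$ is auxiliary: through Lemma \ref{qlem2.8}(3) it secures $\bar{T}\Phi\bar{T}=\bar{T}$ and $\Phi\bar{T}\Phi=\Phi$ for $\Phi=(I_X+T^M\delta T)^{-1}T^M$, which is what ties the homogeneous inverse $\Phi$ to $\bar{T}$; but the decisive structural input for the estimate is the range identity proved next.

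Next I would specialize the two particular solutions. Since $\bar{b}\in\mathcal{R}(\bar{T})$ and $b\in\mathcal{R}(T)$, property $(4)$ of Definition \ref{metricdef1.1} yields $\bar{T}(\bar{T}^M\bar{b})=\pi_{\mathcal{R}(\bar{T})}\bar{b}=\bar{b}$ and $T(T^Mb)=\pi_{\mathcal{R}(T)}b=b$, so $z:=\bar{T}^M\bar{b}\in S(\bar{T},\bar{b})$ and $x:=T^Mb\in S(T,b)$. The crucial point is that $z$ is exactly the vector that generates $x$ in the proof of Theorem \ref{opeqthm1}, i.e.\ that the constructed solution $T^Mb+(I_X-T^MT)z$ collapses to $T^Mb$. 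For this I invoke the range identity already used in Lemma \ref{qlem11.3}: from Lemma \ref{godtlem1} and Definition \ref{metricdef1.1} one has $\mathcal{R}(\bar{T}^M)=F_X^{-1}(\mathcal{N}(\bar{T})^\perp)$ and $\mathcal{R}(T^M)=F_X^{-1}(\mathcal{N}(T)^\perp)$, so $\mathcal{N}(\bar{T})=\mathcal{N}(T)$ forces $\mathcal{R}(\bar{T}^M)=\mathcal{R}(T^M)$. Hence $z=\bar{T}^M\bar{b}\in\mathcal{R}(T^M)=F_X^{-1}(\mathcal{N}(T)^\perp)$, whence $\pi_{\mathcal{N}(T)}z=0$ and $(I_X-T^MT)z=\pi_{\mathcal{N}(T)}z=0$, giving $x=T^Mb$ precisely.

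Finally I would run the two bounds of Theorem \ref{opeqthm1} with this matched pair. The upper bound $\|z-x\|/\|x\|\le\kappa(\epsilon_b+\epsilon_T)/(1-\|T^M\delta T\|)$ transfers verbatim. For the lower bound the only change is that $x=T^Mb$ exactly, so $\|x\|=\|T^Mb\|$ replaces the coarser $\|x\|\le\|T^Mb\|+2\|z\|$ used in \eqref{eqna4}; repeating that computation with the reverse triangle inequality and $\|T^M\delta Tx\|\le\|T^M\|\|\delta T\|\,\|x\|=\kappa\epsilon_T\|x\|$ gives
\[
\frac{\|z-x\|}{\|x\|}\ge\frac{\|T^M(\delta b-\delta Tx)\|}{\|I_X+T^M\delta T\|\,\|x\|}\ge\frac{1}{1+\|T^M\delta T\|}\left(\frac{\|T^M\delta b\|}{\|T^Mb\|}-\kappa\epsilon_T\right),
\]
which is the asserted sharpened denominator. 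I expect the main obstacle to be exactly the range identification $\mathcal{R}(\bar{T}^M)=\mathcal{R}(T^M)$: it simultaneously pins down $x=T^Mb$ and eliminates the $2\|z\|$ term, and it rests essentially on the hypothesis $\mathcal{N}(\bar{T})=\mathcal{N}(T)$ through the Generalized Orthogonal Decomposition Theorem.
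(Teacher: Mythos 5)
Your proposal is correct and follows essentially the same route as the paper's proof: specialize Theorem~\ref{opeqthm1} to $z=\bar{T}^M\bar{b}$, use $\mathcal{N}(\bar{T})=\mathcal{N}(T)$ to show the constructed solution $T^Mb+(I_X-T^MT)\bar{T}^M\bar{b}$ collapses to $x=T^Mb$, and rerun the lower estimate with $\|x\|=\|T^Mb\|$ in place of $\|T^Mb\|+2\|z\|$. Your existence argument via Lemma~\ref{extlem1.1} is if anything cleaner than the paper's, which cites an unresolved lemma reference, and your explicit justification of the sharpened denominator fills a step the paper glosses over.
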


\begin{proof}
From Lemma \ref{lemma4Tb}, we know $\bar{T}^M$ uniquely exists, and then $\bar{z}=\bar{T}^M\bar{b}$ is a solution of the equation \eqref{oqeqsp2}. Now from our proof of Theorem \ref{opeqthm1}, we see $x=T^Mb+(I_X-T^MT)\bar{T}^M\bar{b}$. Noting that $\mathcal{N}(\bar{T})=\mathcal{N}(T)$, thus
\begin{align*}
x&=T^Mb+(I_X-T^MT)\bar{T}^M\bar{b}=T^Mb+\pi_{\mathcal{N}(T)}\bar{T}^M\bar{b}
\\&=T^Mb+\pi_{\mathcal{N}(\bar{T})}\bar{T}^M\bar{b}\\&=T^Mb.
\end{align*}
So by using Theorem \ref{opeqthm1}, we can obtain our result.
\end{proof}

($\mathbf{iii}$) Finally, we consider the following non consistent  operator equation:
\begin{eqnarray}\label{oqeqsp5}
\|Tx-b\|=\inf_{y\in X}\|Ty-b\|.
\end{eqnarray}
Suppose that the equation \eqref{oqeqsp5} is perturbed to the following non consistent linear operator equation:
\begin{eqnarray}\label{oqeqsp61}
\|\bar{T}z-\bar{b}\|=\inf_{y\in X}\|\bar{T}y-\bar{b}\|.
\end{eqnarray}
where $b, \,\bar{b}=b+\delta b\in Y$ and $\bar{T}=T+\delta T$. Denoted by $\tilde{S}(T, b)$ and $\tilde{S}(\bar{T}, \bar{b})$ the solution sets of the equations \eqref{oqeqsp5} and \eqref{oqeqsp61}, respectively. By \cite[Proposition 2.3.7]{XYF1}, we know that the equation \eqref{oqeqsp5} (resp. \eqref{oqeqsp61}) has solutions and $\tilde{S}(T, \bar{b})$ (resp. $\tilde{S}(T, b)$) is closed and convex when $X$ and $Y$ are reflexive and $\mathcal{R}(T)$ (resp. $\mathcal{R}(\bar{T})$) is closed. Moreover, if the Moore--Penrose metric generalized inverse $T^M$ (resp. $\bar{T}^M$) exists, then from Definition \ref{metricdef1.1} (or cf. \cite[Theorem 3.2]{WWH11}), we see that the vector $x=T^Mb$ (resp. $z=\bar{T}^M\bar{b}$) is not only a solution to the equation \eqref{oqeqsp5} (resp. \eqref{oqeqsp61}), but also the minimal norm approximate solution of \eqref{oqeqsp5} (resp. \eqref{oqeqsp61}) among all the
solutions. In order to using our main Theorem \ref{mainperpthm1}, from now on, we always assume that $X, Y$ are smooth reflexive Banach spaces, we also assume that $\mathcal{N}(T)$ and $\mathcal{N}(\bar{T})$ are Chebyshev subspaces of $X$, $\mathcal{R}(T)$ and $\mathcal{R}(\bar{T})$ are Chebyshev subspaces of $Y$, so that both $T^M$ and $\bar{T}^M$ exist.

\begin{thm}\label{opeqthm12}
Let $T,\, \delta T\in B(X,Y)$. Assume that $\mathcal{N}(\bar{T})=\mathcal{N}(T)$ and $\mathcal{R}(\bar{T})=\mathcal{R}(T)$. If $T^M$ is quasi--additive on $\mathcal{R}(T)$, then for any solution $z\in \tilde{S}(\bar{T}, \bar{b})$ of the equation \eqref{oqeqsp61} there exists a solution $x\in \tilde{S}(T, b)$ of the equation \eqref{oqeqsp5} such that
\begin{eqnarray*}
\dfrac{\|z-x\|}{\|x\|}\leq \dfrac{\kappa}{1-\|T^M\delta T\|}\left(\dfrac{\|\bar{b}\|+\|b\|}{\|\pi_{\mathcal{R}(T)}b\|}+\epsilon_T\right).
\end{eqnarray*}
\end{thm}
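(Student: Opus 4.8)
The plan is to adapt the argument of Theorem \ref{opeqthm1} to the best--approximate--solution (non--consistent) setting. First I would translate membership in the solution sets into a projection identity: since $\mathcal{R}(\bar T)$ is Chebyshev, $z\in\tilde{S}(\bar T,\bar b)$ is equivalent to $\bar T z=\pi_{\mathcal{R}(\bar T)}\bar b$, and the hypothesis $\mathcal{R}(\bar T)=\mathcal{R}(T)$ turns this into $\bar T z=\pi_{\mathcal{R}(T)}\bar b$. Likewise any $x$ with $Tx=\pi_{\mathcal{R}(T)}b$ belongs to $\tilde{S}(T,b)$. Guided by this, I would fix $z\in\tilde{S}(\bar T,\bar b)$ and set the candidate $x=T^Mb+(I_X-T^MT)z=T^Mb+\pi_{\mathcal{N}(T)}z$; using $TT^M=\pi_{\mathcal{R}(T)}$ and $T\pi_{\mathcal{N}(T)}=0$ one checks $Tx=\pi_{\mathcal{R}(T)}b$, so indeed $x\in\tilde{S}(T,b)$.

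Next I would compute the difference. Quasi--additivity of $T^M$ on $\mathcal{R}(T)$ together with homogeneity gives $z-x=T^MTz-T^Mb=T^M(Tz-b)$, which lies in $\mathcal{R}(T^M)=F_X^{-1}(\mathcal{N}(T)^\perp)$. By the Generalized Orthogonal Decomposition Theorem (Lemma \ref{godtlem1}) and uniqueness of the decomposition $X=\mathcal{N}(T)\dotplus F_X^{-1}(\mathcal{N}(T)^\perp)$, this forces $\pi_{\mathcal{N}(T)}(z-x)=0$, hence $T^MT(z-x)=(I_X-\pi_{\mathcal{N}(T)})(z-x)=z-x$.

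The central identity I would then establish is $(I_X+T^M\delta T)(z-x)=T^M\bar T(z-x)=T^M(\bar T z-\bar T x)$, where the first equality uses $T^MT(z-x)=z-x$ together with quasi--additivity of $T^M$ on $\mathcal{R}(\delta T)$. Substituting $\bar T z=\pi_{\mathcal{R}(T)}\bar b$ and $\bar T x=\pi_{\mathcal{R}(T)}b+\delta T x$, and simplifying via the relation $T^M\pi_{\mathcal{R}(T)}=T^MTT^M=T^M$, I expect to arrive at $(I_X+T^M\delta T)(z-x)=T^M\bar b-T^Mb-T^M\delta T x$. Since $\|T^M\delta T\|<1$, the factor $I_X+T^M\delta T$ is invertible with $\|(I_X+T^M\delta T)^{-1}\|\le(1-\|T^M\delta T\|)^{-1}$; bounding the right--hand side by $\|T^M\|(\|\bar b\|+\|b\|)+\|T^M\|\|\delta T\|\|x\|$ and dividing by $\|x\|$ gives the estimate. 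The denominator $\|\pi_{\mathcal{R}(T)}b\|$ enters through the lower bound $\|x\|\ge\|Tx\|/\|T\|=\|\pi_{\mathcal{R}(T)}b\|/\|T\|$, and substituting $\kappa=\|T^M\|\|T\|$, $\epsilon_T=\|\delta T\|/\|T\|$ produces exactly the claimed bound.

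The main obstacle is the bookkeeping of quasi--additivity: because $T^M$ is additive only when one summand lies in $\mathcal{R}(T)$ (resp.\ $\mathcal{R}(\delta T)$), I must split the sum $T^M(\pi_{\mathcal{R}(T)}\bar b-\pi_{\mathcal{R}(T)}b-\delta T x)$ in the correct order, peeling off $-\delta T x\in\mathcal{R}(\delta T)$ first and then the two terms in $\mathcal{R}(T)$, so that each application of the quasi--additivity hypothesis is legitimate. Verifying $\pi_{\mathcal{N}(T)}(z-x)=0$ from the decomposition is the other point requiring care; once these two structural facts are in place, the remaining norm estimates are routine.
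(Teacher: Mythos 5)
Your proof is correct, and it reaches the paper's central identity
\[
z-x=(I_X+T^M\delta T)^{-1}\bigl(T^M\bar{b}-T^Mb-T^M\delta Tx\bigr)
\]
with the same candidate $x=T^Mb+(I_X-T^MT)z$ and the same final estimate, but by a genuinely different route to that identity. The paper first invokes Theorem \ref{mainperpthm1} to get the existence and the explicit formula $\bar{T}^M=T^M(I_Y+\delta TT^M)^{-1}$, writes $z=\bar{T}^M\bar{b}+s$ with $s\in\mathcal{N}(\bar{T})$, and computes $z-x=T^MT\bar{T}^M\bar{b}-T^Mb$ by pushing $(I_Y+\delta TT^M)^{-1}$ through $T^M$; it then uses $\mathcal{N}(T)\subset\mathcal{N}(\delta T)$ to replace $\delta TT^Mb$ by $\delta Tx$. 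You instead characterize membership in $\tilde{S}(\bar{T},\bar{b})$ by $\bar{T}z=\pi_{\mathcal{R}(\bar{T})}\bar{b}=\pi_{\mathcal{R}(T)}\bar{b}$ and run the same mechanism as in Theorem \ref{opeqthm1}, namely $(I_X+T^M\delta T)(z-x)=T^M\bar{T}(z-x)$, then simplify with $T^M\pi_{\mathcal{R}(T)}=T^MTT^M=T^M$. What your route buys is economy of hypotheses: you never need $\bar{T}^M$ to exist, hence never need the smoothness of $X$ and $Y$ that Theorem \ref{mainperpthm1} requires --- only the Chebyshev property of $\mathcal{R}(\bar{T})$ and the standing quasi--additivity of $T^M$ on $\mathcal{R}(T)$ and $\mathcal{R}(\delta T)$. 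What the paper's route buys is the explicit identification of $\bar{T}^M\bar{b}$ as the minimal--norm solution, which is then exploited in the corollary that follows. Your attention to the order in which quasi--additivity is applied (peeling off $-\delta Tx\in\mathcal{R}(\delta T)$ before splitting the two terms of $\mathcal{R}(T)$, and combining $T^MTz-T^Mb$ into $T^M(Tz-b)$ before concluding $\pi_{\mathcal{N}(T)}(z-x)=0$, since $F_X^{-1}(\mathcal{N}(T)^\perp)$ need not be a linear subspace) is exactly the care the argument requires.
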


\begin{proof}
The proof is similar to Theorem \ref{opeqthm1}. From our assumption, and by using Theorem \ref{mainperpthm1}, we see $\bar{T}^M$ exists and
$$\bar{T}^M=T^M(I_Y + \delta T T^M)^{-1}=(I_X + T^M\delta T)^{-1}T^M.$$

Now let $z\in \tilde{S}(\bar{T}, \bar{b})$. Noting that $\mathcal{R}(\bar{T})$ is a Chebyshev subspace of $Y$, then we can write $\tilde{S}(\bar{T}, \bar{b})=\bar{T}^M\bar{b}+\mathcal{N}(\bar{T})$. Therefore, we can write $z=\bar{T}^M\bar{b}+s$ with $s \in \mathcal{N}(\bar{T})=\mathcal{N}(T)$.
Put $x=T^Mb+(I_X-T^MT)z$. Then, as in Theorem \ref{opeqthm1}, we can prove that $z-x=T^MT(z-x)$. By using $z=\bar{T}^M\bar{b}+s=T^M(I_Y + \delta T T^M)^{-1}\bar{b}+s$, and noting that $T^M$ is quasi--additive on $\mathcal{R}(T)$, then we can check that
\begin{align}\label{oqeqsp6}
z-x&=T^MT(z-x)=T^MT(\bar{T}^M\bar{b}+s-T^Mb-(I_X-T^MT)z)\nonumber
\\&=T^MT\bar{T}^M\bar{b}-T^Mb\nonumber\\&=T^MTT^M(I_Y + \delta T T^M)^{-1}\bar{b}-T^Mb\nonumber\\&=(I_X +  T^M\delta T)^{-1} T^M\bar{b}-T^Mb\nonumber\\&=(I_X + T^M\delta T)^{-1}(T^M\bar{b}-T^Mb-T^M\delta TT^Mb).
\end{align}
Since $\mathcal{N}(\bar{T})=\mathcal{N}(T)$ implies $\mathcal{N}(T)\subset \mathcal{N}(\delta T)$, thus we have
$$\delta Tx=\delta TT^Mb+\delta T(I_X-T^MT)z=\delta TT^Mb+\delta T\pi_{\mathcal{N}(T)}(z)=\delta TT^Mb.$$
Therefore, by using \eqref{oqeqsp6}, we get
$$z-x=(I_X + T^M\delta T)^{-1}(T^M\bar{b}-T^Mb-T^M\delta Tx).$$
Noting that $Tx=TT^Mb=\pi_{\mathcal{R}(T)}b$, we can get
\begin{align}\label{eqna32}
\dfrac{\|z-x\|}{\|x\|}&=\dfrac{\|(I_X + T^M\delta T)^{-1}(T^M\bar{b}-T^Mb-T^M\delta Tx)\|}{\|x\|}\nonumber\\& \leq \|(I_X + T^M\delta T)^{-1}\|\dfrac{\|T\|\|T^M\|(\|\bar{b}\|+\|b\|+\|\delta T\|\|x\|)}{\|T\|\|x\|}\nonumber\\&\leq \dfrac{\kappa}{1-\|T^M\delta T\|}\left(\dfrac{\|\bar{b}\|+\|b\|}{\|\pi_{\mathcal{R}(T)}b\|}+\epsilon_T\right).
\end{align}
Now, our result follows from \eqref{eqna32}. This completes the proof.
\end{proof}
Noting that, from the proof of Theorem \ref{opeqthm12}, if $b\in \mathcal{R}(T)$, then we can get the same error estimate as in Theorem \ref{opeqthm1}. We also have the following result about the the minimal norm approximate solutions of the equations \eqref{oqeqsp5} and \eqref{oqeqsp61}.

\begin{cor}
Let $T,\, \delta T\in B(X,Y)$. Assume that $\mathcal{N}(\bar{T})=\mathcal{N}(T)$ and $\mathcal{R}(\bar{T})=\mathcal{R}(T)$. If $T^M$ is quasi--additive on $\mathcal{R}(T)$, then for the equations \eqref{oqeqsp5} and \eqref{oqeqsp61} we have
\begin{eqnarray*}
\dfrac{\|\bar{T}^M\bar{b}-T^Mb\|}{\|T^Mb\|}\leq\dfrac{\kappa}{1-\|T^M\delta T\|}\left(\dfrac{\|\bar{b}\|+\|b\|}{\|\pi_{\mathcal{R}(T)}b\|}+\epsilon_T\right).
\end{eqnarray*}
\end{cor}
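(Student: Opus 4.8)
The final statement is the corollary immediately following Theorem \ref{opeqthm12}, asserting the error bound
\begin{eqnarray*}
\dfrac{\|\bar{T}^M\bar{b}-T^Mb\|}{\|T^Mb\|}\leq\dfrac{\kappa}{1-\|T^M\delta T\|}\left(\dfrac{\|\bar{b}\|+\|b\|}{\|\pi_{\mathcal{R}(T)}b\|}+\epsilon_T\right)
\end{eqnarray*}
for the minimal norm approximate solutions. My plan is to specialize the general error estimate of Theorem \ref{opeqthm12} to the canonical pair of minimal norm solutions. Recall from the discussion preceding Theorem \ref{opeqthm12} that under the standing assumptions the vectors $x_0=T^Mb$ and $z_0=\bar{T}^M\bar{b}$ are precisely the minimal norm elements of the solution sets $\tilde{S}(T,b)$ and $\tilde{S}(\bar{T},\bar{b})$, respectively. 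So the corollary should follow by running the proof of Theorem \ref{opeqthm12} with the specific choice $z=\bar{T}^M\bar{b}$.

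First I would take $z=\bar{T}^M\bar{b}\in\tilde{S}(\bar{T},\bar{b})$ and form $x=T^Mb+(I_X-T^MT)z$ exactly as in the theorem's proof. The key observation, which mirrors the corollary following Theorem \ref{opeqthm1}, is that under the hypothesis $\mathcal{N}(\bar{T})=\mathcal{N}(T)$ the correction term vanishes: since $z=\bar{T}^M\bar{b}\in\mathcal{R}(\bar{T}^M)$ and $\mathcal{N}(\bar{T})=\mathcal{N}(T)$, we have $(I_X-T^MT)z=\pi_{\mathcal{N}(T)}z=\pi_{\mathcal{N}(\bar{T})}\bar{T}^M\bar{b}=0$ because $\bar{T}^M\bar{b}$ lies in the complement $F_X^{-1}(\mathcal{N}(\bar{T})^\perp)$ and is annihilated by the projector onto $\mathcal{N}(\bar{T})$. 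Hence $x=T^Mb$, so $\|x\|=\|T^Mb\|$ and $z-x=\bar{T}^M\bar{b}-T^Mb$. Substituting these into the right-hand estimate \eqref{eqna32} of Theorem \ref{opeqthm12} yields exactly the claimed bound.

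The only point requiring care — the main (though minor) obstacle — is verifying that the correction term $(I_X-T^MT)\bar{T}^M\bar{b}$ is genuinely zero. This rests on two facts: that $T^MT=I_X-\pi_{\mathcal{N}(T)}$ from property (3) of Definition \ref{metricdef1.1}, and that $\pi_{\mathcal{N}(\bar{T})}\bar{T}^M\bar{b}=0$, which holds because $\mathcal{R}(\bar{T}^M)=F_X^{-1}(\mathcal{N}(\bar{T})^\perp)$ is exactly the complementary summand in the generalized orthogonal decomposition $X=\mathcal{N}(\bar{T})\dotplus F_X^{-1}(\mathcal{N}(\bar{T})^\perp)$ guaranteed by Lemma \ref{godtlem1}, on which $\pi_{\mathcal{N}(\bar{T})}$ vanishes. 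With $\mathcal{N}(\bar{T})=\mathcal{N}(T)$ these two projectors coincide, closing the argument. Everything else is a direct invocation of Theorem \ref{opeqthm12}, so no new estimates are needed beyond confirming that $z_0=\bar{T}^M\bar{b}$ is an admissible choice of $z$, which is immediate since $\bar{T}^M$ exists under the standing hypotheses and $\bar{T}^M\bar{b}$ solves \eqref{oqeqsp61}.
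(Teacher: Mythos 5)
Your proposal is correct and follows essentially the same route as the paper: take $z=\bar{T}^M\bar{b}$ in the proof of Theorem \ref{opeqthm12}, observe that $\mathcal{N}(\bar{T})=\mathcal{N}(T)$ forces $(I_X-T^MT)\bar{T}^M\bar{b}=\pi_{\mathcal{N}(\bar{T})}\bar{T}^M\bar{b}=0$ so that $x=T^Mb$, and then read off the bound. Your extra justification of the vanishing of the correction term via the generalized orthogonal decomposition is a correct elaboration of the step the paper leaves implicit.
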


\begin{proof}
If we take $z=\bar{T}^M\bar{b}$ in our proof of Theorem \ref{opeqthm12}, then we see $x=T^Mb$ since $\mathcal{N}(\bar{T})=\mathcal{N}(T)$. Now our result follows from Theorem \ref{opeqthm12}.
\end{proof}


\end{document}